\documentclass[11pt]{article}
       \usepackage{amsfonts}
       \usepackage{stmaryrd}
       \usepackage{latexsym,amssymb,mathrsfs,fancyhdr}
       \font\tenmsb=msbm10
       \font\sevenmsb=msbm7
       \font\fivemsb=msbm5
       \catcode`\@=11
       \ifx\amstexloaded@\relax\catcode`\@=\active
       \endinput\else\let\amstexloaded@\relax\fi
       \def\spaces@{\space\space\space\space\space}
       \def\spaces@@{\spaces@\spaces@\spaces@\spaces@\spaces@}
       \def\space@.  {\futurelet\space@\relax}
       \space@.   %
       \def\Err@#1{\errhelp\defaulthelp@\errmessage{AmS-teX error: #1}}
       \def\relaxnext@{\let\next\relax}
       \def\accentfam@{7}
       \def\noaccents@{\def\accentfam@{0}}
       \def\Cal{\relaxnext@\ifmmode\let\next\Cal@\else
       \def\next{\Err@{Use \string\Cal\space only in math mode}}\fi\next}
       \def\Cal@#1{{\Cal@@{#1}}}
       \def\Cal@@#1{\noaccents@\fam\tw@#1}
       \def\Bbb{\relaxnext@\ifmmode\let\next\Bbb@\else
       \def\next{\Err@{Use \string\Bbb\space only in math mode}}\fi\next}
       \def\Bbb@#1{{\Bbb@@{#1}}}
       \def\Bbb@@#1{\noaccents@\fam\msbfam#1}
       \def\co{\tiny{\textcircled{\tiny\#}}}
       \newfam\msbfam
       \textfont\msbfam=\tenmsb
       \scriptfont\msbfam=\sevenmsb
       \scriptscriptfont\msbfam=\fivemsb
\usepackage{dsfont}
\usepackage[german,english]{babel}
\usepackage{amsmath,amssymb}
\usepackage[square, comma, sort&compress, numbers]{natbib}
\usepackage{cases}
\usepackage{mathrsfs}
\usepackage{amsmath}
\usepackage{amsthm}
\usepackage{amsfonts}
\usepackage{amssymb}
\usepackage{latexsym}
\usepackage{fancyhdr}
\usepackage{extarrows}

\newtheorem{thm}{Theorem}[section]
\newtheorem{prop}[thm]{Proposition}
\newtheorem{lem}[thm]{Lemma}
\newtheorem{rem}[thm]{Remark}
\newtheorem{iteration lemma}[thm]{iteration Lemma}
\newtheorem{cor}[thm]{Corollary}

\newtheorem{eg}[thm]{Example}

\newtheorem*{acknowledgements*}{ACKNOWLEDGEMENtS}

\begin{document}

\setlength{\columnsep}{5pt}
\title{\bf Characterizations of core and dual core inverses in rings with involution}
\author{Tingting  Li\footnote{ E-mail: littnanjing@163.com},
\ Jianlong Chen\footnote{ Corresponding author. E-mail: jlchen@seu.edu.cn} \\
Department of  Mathematics, Southeast University \\  Nanjing 210096,  China }
     \date{}

\maketitle
\begin{quote}
{\textbf{}\small
Let $R$ be a unital ring with involution,
we give the characterizations and representations of the core and dual core inverses of an element in $R$ by Hermitian elements (or projections) and units.
For example,
let $a\in R$ and $n\geqslant 1$,
then $a$ is core invertible if and only if there exists a Hermitian element (or a projection) $p$ such that  $pa=0$, $a^{n}+p$ is invertible.
As a consequence,
$a$ is an $\mathrm{EP}$ element if and only if there exists a Hermitian element (or a projection) $p$ such that  $pa=ap=0$, $a^{n}+p$ is invertible.
We also get a new characterization for both core invertible and dual core invertible of a regular element by units,
and their expressions are shown.
In particular,
we prove that for $n\geqslant 2$,
$a$ is both Moore-Penrose invertible and group invertible if and only if $(a^{\ast})^{n}$ is invertible along $a$.

\textbf {Keywords:} {\small Core inverse, Dual core inverse, Group inverse, (von Neumann) regularity, Moore-Penrose inverse, \{1,3\}-inverse, \{1,4\}-inverse.}

\textbf {AMS subject classifications:} {15A09, 16W10, 16U80.}
}
\end{quote}

\section{ Introduction }\label{a}
Let $R$ be a unital ring.
Recall that an element $a\in R$ is (von Neumann) regular if there exists $x\in R$ satisfying $axa=a$.
Such $x$ is called an inner inverse of $a$ and denoted by $a^{-}$.
If there is an element $x\in R$ such that $axa=a$ and $xax=x$,
then $x$ is called the $\{1,2\}-$inverse of $a$.
The symbols $a\{1\}$ and $a\{1,2\}$ denote the set of all inner inverse and $\{1,2\}-$inverse of $a$,
respectively.
An element $a\in R$ is group invertible if there is $x\in a\{1,2\}$ that commutes with $a$.
The group inverse of $a$ is unique if it exists and denoted by $a^{\#}$.

An involution $\ast$ in $R$ is an anti-isomorphism of degree 2 in $R$,
that is to say,
$(a^{\ast})^{\ast}=a, (a+b)^{\ast}=a^{\ast}+b^{\ast}$ and $(ab)^{\ast}=b^{\ast}a^{\ast}$ for all $a, b\in R$.
We say that $a$ is Moore-Penrose invertible if there exists $x$ satisfying the following four equations:
\begin{equation*}
\begin{split}
    (1)~axa=a,~(2)~xax=x,~(3)~(ax)^{\ast}=ax,~(4)~(xa)^{\ast}=xa.
\end{split}
\end{equation*}
If such $x$ exists,
then it is called a Moore-Penrose inverse of $a$,
it is unique and denoted by $a^{\dagger}$.
If $x$ satisfies the equations $(1)$ and $(3)$,
then $x$ is called a $\{1,3\}$-inverse of $a$ and denoted by $a^{(1,3)}$,
and a $\{1,4\}$-inverse of $a$ can be similarly defined.
If $a$ is both Moore-Penrose invertible and group invertible with $a^{\dagger}=a^{\#}$,
then $a$ is said to be an EP element.
The set of all Moore-Penrose invertible,
group invertible,
invertible,
$\{1,3\}$-invertible,
$\{1,4\}$-invertible and EP elements in $R$ are denoted by the symbols $R^{\dagger}, R^{\#}, R^{-1}, R^{\{1,3\}}$, $R^{\{1,4\}}$ and $R^{\mathrm{EP}}$,
respectively.
An element $a$ is Hermitian (or symmetric) if $a^{\ast}=a$,
and $a$ is called an idempotent if $a^{2}=a$.
A Hermitian idempotent is said to be a projection.

The core and dual core inverses of a complex matrix were introduced by Baksalary and Trenkler in \cite{OM}.
Raki\'{c} et al. \cite{DSR} generalized core inverses of a complex matrix to the case of an element in a ring with involution.
An element $x\in R$ is said to be a core inverse of $a$ if it satisfies
\begin{equation*}
\begin{split}
    axa=a,~xR=aR,~Rx=Ra^{\ast},
\end{split}
\end{equation*}
such an element $x$ is unique if it exists and denoted by $a^{\co}$.
There is a dual concept of core inverses which is called dual core inverses.
The symbols $R^{\co}$ and $R_{\co}$ stand for the set of all the core invertible and dual core invertible elements in $R$, respectively.

In \cite{KPS},
K.P.S. Bhaskara Rao characterized group inverse of an element by idempotents.
In \cite{Han},
R.Z. Han and J.L. Chen characterized $\{1,3\}-$inverse of an element by projections.
In \cite{XSZ2},
S.Z. Xu et al. gave the characterizations and expressions of Moore-Penrose inverse of an element by a Hermitian element (or a projection).
Motivated by the above mentioned results,
this paper characterizes the existence of core and dual core inverses of an element by Hermitian elements (or projections) and units.
Moreover,
we give their representations.
We prove that $a$ is core invertible if and only if there exists a Hermitian element (or a projection) $p$ such that  $pa=0$, $a^{n}+p$ is invertible for $n\geqslant 1$.
As a consequence,
we get a new characterization of an $\mathrm{EP}$ element,
namely,
$a$ is an $\mathrm{EP}$ element if and only if there exists a Hermitian element (or a projection) $p$ such that  $pa=ap=0$, $a^{n}+p$ is invertible.

In \cite{PP},
P. Patri\'{c}io et al. discussed the characterizations and expressions of Moore-Penrose inverse of a regular element.
In \cite{Chen},
J.L. Chen et al. gave the characterizations and expressions of group inverse and core inverse of a regular element,
and they also showed the characterizations for both core invertible and dual core invertible of a regular element.
This article give a new characterization for both core invertible and dual core invertible of a regular element.

In \cite{X3},
X. Mary introduced a new generalized inverse,
namely,
if $a, d\in R$ and there exists $y$ such that $yad=d=day$, $yR\subset dR$ and $Ry\subset Rd$,
then we say that $a$ is invertible along $d$. 
In which case, 
$y$ is called the inverse of $a$ along $d$ and denoted by $a^{\|d}$.
In \cite{CZ},
H.H. Zhu et al. said that $a$ is left (resp. right) invertible along $d$ if there exists $y$ such that $yad=d$ (resp. $day=d$) and $Ry\subset Rd$ (resp. $yR\subset dR$).
X. Mary\cite{X3} told us that $a^{\#}=a^{\|a}$ and $a^{\dagger}=a^{\|a^{\ast}}$.
It is easy to know that $a\in R^{\dagger}$ if and only if $a^{\ast}$ is invertible along $a$, 
this paper will prove that for $n\geqslant 2$,
$a\in R^{\dagger}\cap R^{\#}$ if and only if $(a^{\ast})^{n}$ is invertible along $a$.

We will also use the following notations:
$aR=\{ax | x\in R\}$, $Ra=\{xa | x\in R\}$, $^{\circ}\!a=\{x\in R | xa=0\}$, $a^{\circ}=\{x\in R | ax=0\}$.

\section{Preliminaries}\label{a}
Throughout this paper,
$R$ is a unital ring with involution.
In this section,
some auxiliary lemmas and results are presented for the further reference.

\begin{lem} \cite[p.$201$]{H}\label{13-14-inverse}
Let $a\in R$,
we have the following results:\\
$(1)$ $a$ is $\{1,3\}$-invertible with $\{1,3\}$-inverse $x$ if and only if $x^{\ast}a^{\ast}a=a;$\\
$(2)$ $a$ is $\{1,4\}$-invertible with $\{1,4\}$-inverse $y$ if and only if $aa^{\ast}y^{\ast}=a.$
\end{lem}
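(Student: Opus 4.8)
The plan is to prove (1) by directly unpacking the two defining equations of a $\{1,3\}$-inverse, namely $axa=a$ and $(ax)^{\ast}=ax$, and showing that together they are equivalent to the single identity $x^{\ast}a^{\ast}a=a$; part (2) will then follow by applying the involution. For the forward implication of (1), I would start from the symmetry condition and use $(ax)^{\ast}=x^{\ast}a^{\ast}$ to rewrite $(ax)^{\ast}=ax$ as $x^{\ast}a^{\ast}=ax$. Right-multiplying by $a$ and invoking $axa=a$ then gives $x^{\ast}a^{\ast}a=(ax)a=axa=a$, which is exactly the claimed identity.

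For the converse, I would assume $x^{\ast}a^{\ast}a=a$ and recover both defining equations. The first move is to apply the involution to the hypothesis, which yields $a^{\ast}ax=a^{\ast}$. Next I would right-multiply the hypothesis by $x$ to obtain $x^{\ast}a^{\ast}ax=ax$, and then substitute $a^{\ast}ax=a^{\ast}$ into the left-hand side to reach $ax=x^{\ast}a^{\ast}$. Since $x^{\ast}a^{\ast}=(ax)^{\ast}$, this is precisely condition (3). Finally $axa=(x^{\ast}a^{\ast})a=x^{\ast}a^{\ast}a=a$ recovers condition (1), completing the equivalence. The computations are routine, so I do not expect a genuine obstacle; the one point requiring care is this converse, where the symmetry condition (3) is not immediate from the single identity and hinges on applying the involution to the hypothesis and then using the substitution $a^{\ast}ax=a^{\ast}$ after right-multiplying by $x$.

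For part (2) I would proceed by a duality argument rather than repeating the manipulation. Applying the involution to the equations $aya=a$ and $(ya)^{\ast}=ya$ shows that $y$ is a $\{1,4\}$-inverse of $a$ if and only if $y^{\ast}$ is a $\{1,3\}$-inverse of $a^{\ast}$. Feeding $a^{\ast}$ and $y^{\ast}$ into part (1) gives $(y^{\ast})^{\ast}(a^{\ast})^{\ast}a^{\ast}=a^{\ast}$, that is $yaa^{\ast}=a^{\ast}$, and taking the involution once more yields $aa^{\ast}y^{\ast}=a$, as required.
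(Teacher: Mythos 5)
Your proof is correct: the forward direction, the recovery of both defining equations from the single identity $x^{\ast}a^{\ast}a=a$ (via the involution and the substitution $a^{\ast}ax=a^{\ast}$), and the duality reduction of (2) to (1) are all sound. The paper itself gives no proof — it cites this lemma from Hartwig — and your argument is the standard direct verification one would expect, so there is nothing to reconcile.
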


This lemma tells us that $a\in R^{\{1,3\}}$ if and only if $a\in Ra^{\ast}a$.
Similarly,
$a\in R^{\{1,4\}}$ if and only if $a\in aa^{\ast}R$.

\begin{lem}(\cite[Theorem $2.19$ and $2.20$]{CZ}, \cite{P})\label{M-P1}
Let $a\in R$,
the following conditions are equivalent:\\
$(1)$ $a\in R^{\dagger}$;\\
$(2)$ $a\in Raa^{\ast}a$;\\
$(3)$ $a\in aa^{\ast}aR$;\\
$(4)$ $a\in Ra^{\ast}a\cap aa^{\ast}R$.\\
In this case,
\begin{equation*}
\begin{split}
    a^{\dagger}= a^{\ast}ax^{2}a^{\ast}= a^{\ast}y^{2}aa^{\ast}= sat,
\end{split}
\end{equation*}
where $a= aa^{\ast}ax= yaa^{\ast}a= t^{\ast}a^{\ast}a= aa^{\ast}s^{\ast}$.
\end{lem}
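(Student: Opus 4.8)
The plan is to funnel all four conditions through the one-sided inverses furnished by Lemma~\ref{13-14-inverse} and the remark following it, which identify $a\in R^{\{1,3\}}$ with $a\in Ra^{\ast}a$ and $a\in R^{\{1,4\}}$ with $a\in aa^{\ast}R$. The spine of the argument is $(1)\Leftrightarrow(4)$. In one direction, if $a\in R^{\dagger}$ then $a^{\dagger}$ is simultaneously a $\{1,3\}$- and a $\{1,4\}$-inverse, so both memberships in $(4)$ hold. Conversely, given a $\{1,4\}$-inverse $s$ and a $\{1,3\}$-inverse $t$, I would check that $z:=sat$ satisfies the four Penrose equations: since $asa=a$ and $ata=a$ one gets $az=at$ and $za=sa$, whence $(az)^{\ast}=az$, $(za)^{\ast}=za$, $aza=a$ and $zaz=z$ fall out by direct cancellation. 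This proves $(1)\Leftrightarrow(4)$ and at the same time delivers the representation $a^{\dagger}=sat$.

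For $(4)\Rightarrow(2)$ and $(4)\Rightarrow(3)$ I would argue by substituting one factorization into the other through the involution. Writing $a=ma^{\ast}a$ and $a=aa^{\ast}n$ from $(4)$ and applying $\ast$ to the second gives $a^{\ast}=n^{\ast}aa^{\ast}$, so that
\[
a=ma^{\ast}a=mn^{\ast}aa^{\ast}a\in Raa^{\ast}a,
\]
which is $(2)$. The mirror computation, using instead $a^{\ast}=a^{\ast}am^{\ast}$, yields $a=aa^{\ast}am^{\ast}n\in aa^{\ast}aR$, which is $(3)$.

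The crux is $(2)\Rightarrow(4)$, with $(3)\Rightarrow(4)$ completely dual. From $a=yaa^{\ast}a$ the inclusion $a=(ya)a^{\ast}a\in Ra^{\ast}a$ is immediate, so $a\in R^{\{1,3\}}$; the genuine difficulty is converting this left-sided factorization into the right-sided one $a\in aa^{\ast}R$. My idea is to read the relation $(ya)a^{\ast}a=a$ through Lemma~\ref{13-14-inverse}$(1)$: it says $c:=a^{\ast}y^{\ast}$ is a $\{1,3\}$-inverse of $a$, and therefore $ac=aa^{\ast}y^{\ast}$ is Hermitian. Taking adjoints in $ac=(ac)^{\ast}$ forces the key identity $aa^{\ast}y^{\ast}=yaa^{\ast}$, and substituting it back gives $a=yaa^{\ast}a=aa^{\ast}y^{\ast}a\in aa^{\ast}R$, i.e. $a\in R^{\{1,4\}}$, so $(4)$ holds. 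I expect this step to be the main obstacle, and the point I would stress is that the Hermitian identity coming from $ac=(ac)^{\ast}$ is exactly what lets the left--right passage go through algebraically, with no properness of the involution required.

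Finally, the explicit formulas are read off from $a^{\dagger}=sat$ once compatible one-sided inverses are named. With $a=yaa^{\ast}a$ I would take $t=a^{\ast}y^{\ast}$ (a $\{1,3\}$-inverse, since $(ya)a^{\ast}a=a$) and $s=a^{\ast}y$ (a $\{1,4\}$-inverse, since $aa^{\ast}(y^{\ast}a)=a$ by the identity above), giving $a^{\dagger}=sat=a^{\ast}y\,aa^{\ast}y^{\ast}=a^{\ast}y^{2}aa^{\ast}$ after one more use of $aa^{\ast}y^{\ast}=yaa^{\ast}$. The symmetric choice for $a=aa^{\ast}ax$ produces $a^{\dagger}=a^{\ast}ax^{2}a^{\ast}$, and I would present these computations alongside $(2)\Rightarrow(4)$ since they hinge on the same Hermitian relation.
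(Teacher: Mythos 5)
The paper does not prove this lemma at all; it is quoted verbatim from \cite[Theorems 2.19 and 2.20]{CZ} and \cite{P}, so there is no in-text argument to compare against. Your proof is correct and self-contained modulo Lemma~\ref{13-14-inverse}: the cycle $(1)\Leftrightarrow(4)$, $(4)\Rightarrow(2),(3)$, $(2)\Rightarrow(4)$, $(3)\Rightarrow(4)$ covers all equivalences, the verification that $z=sat$ satisfies the four Penrose equations is the standard one, and the key step $(2)\Rightarrow(4)$ --- extracting the Hermitian identity $aa^{\ast}y^{\ast}=yaa^{\ast}$ from the fact that $a^{\ast}y^{\ast}$ is a $\{1,3\}$-inverse and using it to convert $a=yaa^{\ast}a$ into $a=aa^{\ast}(y^{\ast}a)\in aa^{\ast}R$ --- is exactly the mechanism used in the cited source; the formulas $a^{\dagger}=a^{\ast}y^{2}aa^{\ast}$, $a^{\dagger}=a^{\ast}ax^{2}a^{\ast}$ and $a^{\dagger}=sat$ all check out.
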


The condition $(4)$ of this lemma shows that $a\in R^{\dagger}$ if and only if $a\in R^{\{1,3\}}\cap R^{\{1,4\}}$.

\begin{lem} \cite[Theorem $2.19$]{DSR}\label{core->group}
Let $a\in R$.\\
$(1)$ If $a\in R^{\co}$,
then $a\in R^{\#}$ and $a^{\#}=(a^{\co})^{2}a$;\\
$(2)$ if $a\in R_{\co}$,
then $a\in R^{\#}$ and $a^{\#}=a(a_{\co})^{2}$.
\end{lem}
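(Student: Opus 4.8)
The plan is to collapse the ideal-theoretic definition of the core inverse into a few multiplicative identities and then exhibit $x^{2}a$ as the group inverse by a direct check. Write $x=a^{\co}$, so $axa=a$, $xR=aR$ and $Rx=Ra^{\ast}$, and put $e=ax$. First I would record the cheap consequences. From $axa=a$ one gets $e^{2}=(axa)x=ax=e$ and $ea=axa=a$. From $xR=aR$ one has $e=ax\in aR$ and $a=ea\in eR$, so $eR=aR$; moreover $x\in xR=aR=eR$ gives $ex=x$, i.e. $ax^{2}=x$, and $a\in aR=xR$ gives $a=xc$ for some $c$.

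The first genuine step is to show that $e=ax$ is \emph{Hermitian}. For this I would use the involution of $Rx=Ra^{\ast}$, namely $x^{\ast}R=aR=eR$, to write $x^{\ast}=ed$ and deduce $ex^{\ast}=e^{2}d=x^{\ast}$. Since $e^{\ast}=(ax)^{\ast}=x^{\ast}a^{\ast}$, this gives $ee^{\ast}=ex^{\ast}a^{\ast}=x^{\ast}a^{\ast}=e^{\ast}$; applying $\ast$ to $ee^{\ast}=e^{\ast}$ yields $ee^{\ast}=e$, and comparing the two identities forces $e=e^{\ast}$. With $e$ Hermitian I then have $Re=Ra^{\ast}$ (because $e=x^{\ast}a^{\ast}\in Ra^{\ast}$, and $a^{\ast}=a^{\ast}e\in Re$ from $ea=a$), hence $Rx=Re$, so $x\in Re$ and therefore $xe=x$, that is $xax=x$.

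The crux is the left-hand identity $xa^{2}=a$, and this is exactly where $Rx=Ra^{\ast}$ must be spent. Using $a=xc$ I get $a^{2}=axc=ec$, so $a^{2}-c=(e-1)c$, and then
\begin{equation*}
xa^{2}-a=x(a^{2}-c)=x(e-1)c=(xe-x)c=0,
\end{equation*}
since $xe=x$. With $ax^{2}=x$ and $xa^{2}=a$ in hand, setting $g=x^{2}a$ makes the rest routine: $ag=ax^{2}a=xa$ and $ga=x^{2}a^{2}=x(xa^{2})=xa$, so $ag=ga$; next $aga=ax^{2}a^{2}=xa^{2}=a$; and $gag=g\,(ag)=x^{2}a\,xa=x^{2}(axa)=x^{2}a=g$. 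Hence $g=a^{\#}$ and $a^{\#}=(a^{\co})^{2}a$, which is $(1)$.

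For $(2)$ I would not redo the computation but transport $(1)$ through the involution. The dual core inverse is the mirror image of the core inverse, so if $y=a_{\co}$ then $y^{\ast}=(a^{\ast})^{\co}$; applying $(1)$ to $a^{\ast}$ gives $(a^{\ast})^{\#}=((a^{\ast})^{\co})^{2}a^{\ast}=(y^{\ast})^{2}a^{\ast}$, and taking $\ast$ together with $(a^{\ast})^{\#}=(a^{\#})^{\ast}$ yields $a^{\#}=a\,y^{2}=a(a_{\co})^{2}$. The main obstacle is precisely the combination of the Hermitian step and $xa^{2}=a$: the two ideal conditions by themselves most naturally deliver only right-sided data ($e=ax$ idempotent, $ax^{2}=x$, $a\in a^{2}R$), and the real work is to squeeze the left-sided identity $xe=x$ (equivalently $xa^{2}=a$) out of $Rx=Ra^{\ast}$ once one knows that $ax$ is Hermitian.
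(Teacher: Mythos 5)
Your proof is correct: every step checks out, from the derivation of $e=ax$ being a Hermitian idempotent, through the key identities $ax^{2}=x$, $xax=x$ and $xa^{2}=a$, to the direct verification that $x^{2}a$ satisfies the three group-inverse equations, and the involution argument for part $(2)$ is also sound. Note, however, that the paper does not prove this lemma at all --- it is quoted from Raki\'{c} et al.\ \cite[Theorem 2.19]{DSR} --- so there is no in-paper argument to compare against; your derivation is essentially the standard one from the ideal-theoretic definition (it amounts to establishing the multiplicative characterization of $a^{\co}$ used in the original source), and it is a perfectly valid self-contained replacement for the citation.
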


\begin{lem} \cite[Theorem $2.6$ and $2.8$]{XSZ}\label{dual-core-inverse}
Let $a\in R$,
we have the following results:\\
$(1)$ $a\in R^{\co}$ if and only if $a\in R^{\#}\cap R^{\{1,3\}}$. In this case, $a^{\co}=a^{\#}aa^{(1,3)}$.\\
$(2)$ $a\in R_{\co}$ if and only if $a\in R^{\#}\cap R^{\{1,4\}}$. In this case, $a_{\co}=a^{(1,4)}aa^{\#}$.
\end{lem}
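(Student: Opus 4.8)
The plan is to prove each equivalence by exhibiting the relevant inverse explicitly and checking the defining conditions, leaning on the reformulation following Lemma \ref{13-14-inverse} for the one-sided $\{1,3\}$- and $\{1,4\}$-parts and on Lemma \ref{core->group} for the group-inverse half. Since part $(2)$ is the exact involution-dual of part $(1)$ (the map $a\mapsto a^{\ast}$ preserves $R^{\#}$, interchanges $R^{\{1,3\}}$ with $R^{\{1,4\}}$ and $R^{\co}$ with $R_{\co}$, and sends a $\{1,4\}$-inverse $y$ of $a$ to a $\{1,3\}$-inverse $y^{\ast}$ of $a^{\ast}$), I would prove $(1)$ in full and then obtain $(2)$ by dualizing, so I concentrate on $(1)$.

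For the forward direction of $(1)$, assume $a\in R^{\co}$ with $x=a^{\co}$, so that $axa=a$, $xR=aR$ and $Rx=Ra^{\ast}$. Group invertibility is immediate from Lemma \ref{core->group}. For $\{1,3\}$-invertibility I would invoke the remark after Lemma \ref{13-14-inverse}, namely that $a\in R^{\{1,3\}}$ if and only if $a\in Ra^{\ast}a$: from $Rx=Ra^{\ast}$ we may write $x=sa^{\ast}$ for some $s\in R$, and then $a=axa=(as)a^{\ast}a\in Ra^{\ast}a$, which settles $a\in R^{\{1,3\}}$ at once.

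For the converse, assume $a\in R^{\#}\cap R^{\{1,3\}}$, fix a group inverse $a^{\#}$ and a $\{1,3\}$-inverse $a^{(1,3)}$, and set $x=a^{\#}aa^{(1,3)}$. I would verify the three defining conditions of the core inverse. Using $a^{\#}a=aa^{\#}$, the identity $a^{\#}a^{2}=a$, and $aa^{(1,3)}a=a$, a short computation gives the two key identities $ax=aa^{(1,3)}$ and $xa=a^{\#}a$; from these $axa=(ax)a=aa^{(1,3)}a=a$ follows. Moreover $xa^{2}=(xa)a=a^{\#}a^{2}=a$ shows $a\in xR$, while $x=a(a^{\#}a^{(1,3)})\in aR$ shows $x\in aR$, so $xR=aR$. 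The delicate condition is $Rx=Ra^{\ast}$: one inclusion uses that $aa^{(1,3)}$ is Hermitian, so $x=a^{\#}(aa^{(1,3)})=a^{\#}(a^{(1,3)})^{\ast}a^{\ast}\in Ra^{\ast}$; for the reverse inclusion I would exploit that $ax=aa^{(1,3)}$ is a Hermitian idempotent, whence $a^{\ast}=a^{\ast}(aa^{(1,3)})=a^{\ast}(ax)=(a^{\ast}a)x\in Rx$. Uniqueness of the core inverse then identifies $x=a^{\#}aa^{(1,3)}$ as $a^{\co}$, giving the stated formula.

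I expect the main obstacle to be the reverse inclusion $Ra^{\ast}\subseteq Rx$: it is precisely here that the Hermitian property of the $\{1,3\}$-inverse must be transferred onto the candidate $x$, and the argument only closes after noticing the identity $ax=aa^{(1,3)}$, which collapses the factor $a^{\#}a$ in $x$ against $a$ on the left and turns $ax$ into the Hermitian idempotent $aa^{(1,3)}$. The remaining verifications are routine idempotent manipulations once $ax=aa^{(1,3)}$ and $xa=a^{\#}a$ are in hand, and part $(2)$ then follows by dualizing every step through the involution, using Lemma \ref{13-14-inverse}$(2)$, Lemma \ref{core->group}$(2)$, and the candidate $a^{(1,4)}aa^{\#}$.
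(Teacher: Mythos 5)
The paper does not prove this lemma at all: it is imported verbatim from the reference \cite{XSZ} (Xu--Chen--Zhang) and used as a black box, so there is no internal proof to compare yours against. Your blind argument is a correct, self-contained verification. The forward direction is right: group invertibility comes from Lemma~\ref{core->group}, and $x=sa^{\ast}$ together with $a=axa=(as)a^{\ast}a$ places $a$ in $Ra^{\ast}a$, which by the remark after Lemma~\ref{13-14-inverse} is exactly $\{1,3\}$-invertibility. In the converse, the two identities $ax=aa^{(1,3)}$ and $xa=a^{\#}a$ (both immediate from $aa^{\#}a=a$ and $aa^{(1,3)}a=a$) do all the work: they give $axa=a$, the equality $xR=aR$ via $x=a(a^{\#}a^{(1,3)})$ and $a=xa^{2}$, and the equality $Rx=Ra^{\ast}$ via $x=a^{\#}(a^{(1,3)})^{\ast}a^{\ast}$ in one direction and $a^{\ast}=(a^{\ast}a)x$ in the other, the latter using precisely the Hermitian property of $aa^{(1,3)}$ as you point out. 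Uniqueness of the core inverse then pins down the formula, and the dualization to part $(2)$ via $a\mapsto a^{\ast}$ (which swaps $R^{\{1,3\}}$ with $R^{\{1,4\}}$ and $R^{\co}$ with $R_{\co}$, and sends $a^{(1,4)}$ to the $\{1,3\}$-inverse $(a^{(1,4)})^{\ast}$ of $a^{\ast}$) is legitimate and yields $a_{\co}=a^{(1,4)}aa^{\#}$ as claimed. The one thing your write-up silently relies on is the commutation $a^{\#}a=aa^{\#}$ when rewriting $x=a^{\#}aa^{(1,3)}$ as $a(a^{\#}a^{(1,3)})$; that is of course part of the definition of the group inverse, but it is worth flagging since it is the only place the full strength of group (rather than mere inner) invertibility enters the $xR=aR$ verification.
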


\begin{lem} \cite[Proposition $7$]{H}\label{group-inverse}
Let $a\in R$,
$a\in R^{\#}$ if and only if $a=a^{2}x=ya^{2}$ for some $x, y\in R$.
In this case, $a^{\#}=yax=y^{2}a=ax^{2}$.
\end{lem}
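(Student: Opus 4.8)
The plan is to treat the two implications separately, with the ``if'' direction carrying essentially all the content. For the forward direction, suppose $a\in R^{\#}$ and set $z=a^{\#}$, so that $aza=a$ and $az=za$. Then $a=aza=a(za)=a(az)=a^2z$ and, symmetrically, $a=aza=(az)a=(za)a=za^2$, so the two factorizations hold with $x=y=a^{\#}$. Hence only the converse requires real work.

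For the converse, assume $a=a^2x=ya^2$ for some $x,y\in R$. The first step, and the one crux of the whole argument, is to extract a bridging identity by combining the two hypotheses: left-multiplying $a=a^2x$ by $y$ and using $ya^2=a$ gives $ya=(ya^2)x=ax$. With the single relation $ya=ax$ in hand I would propose $z=yax$ as the candidate group inverse and record its three equivalent shapes, namely $z=yax=y(ya)=y^2a$ and $z=yax=(ax)x=ax^2$, each obtained by substituting $ya=ax$ once. This simultaneously pins down the announced formula $a^{\#}=yax=y^2a=ax^2$, provided $z$ really is the group inverse.

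It then remains to check the three defining equations of the group inverse, and each reduces to a single application of $a^2x=a$ or $ya^2=a$. Commutativity comes from $az=a(ax^2)=(a^2x)x=ax$ together with $za=(y^2a)a=y(ya^2)=ya$, and these agree since $ax=ya$; the inner-inverse equation follows from $aza=a(az)=a(ax)=a^2x=a$; and the outer-inverse equation from $zaz=(za)z=(ya)(ax^2)=(ya^2)x^2=ax^2=z$. These three facts give $a\in R^{\#}$ with group inverse $z$, and since the group inverse is unique whenever it exists, we conclude $a^{\#}=z=yax=y^2a=ax^2$. I do not anticipate a genuine difficulty beyond the single nonmechanical move: spotting the identity $ya=ax$ and then grouping the monomials so that one hypothesis collapses each product. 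Everything downstream of that identity is forced, so the only risk is a bookkeeping slip in the associative regroupings.
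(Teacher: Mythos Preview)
Your proof is correct: the key identity $ya=ax$ follows from $ya=y(a^{2}x)=(ya^{2})x=ax$, and with it the verification that $z=yax=y^{2}a=ax^{2}$ satisfies $aza=a$, $zaz=z$, and $az=za$ is routine and carried out accurately. Note, however, that the paper does not supply its own proof of this lemma; it is quoted from Hartwig \cite[Proposition~7]{H} and used as a black box, so there is no in-paper argument to compare against. Your direct verification is the standard one and is what Hartwig's original proof amounts to.
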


\begin{lem} \cite{BAB, DSC}\label{J.L}
Let $a, b\in R$,
$1+ab$ is invertible if and only if $1+ba$ is invertible.
\end{lem}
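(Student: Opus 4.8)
The plan is to prove only one implication and obtain the other for free by symmetry: since the statement is invariant under interchanging the roles of $a$ and $b$, it suffices to show that invertibility of $1+ab$ forces invertibility of $1+ba$. The strategy is not to argue abstractly but to write down an explicit two-sided inverse of $1+ba$ in terms of $u:=(1+ab)^{-1}$, and then verify the defining equations by direct manipulation. The correct candidate is suggested by the formal Neumann-series heuristic
\begin{equation*}
(1+ba)^{-1}=1-ba+baba-\cdots=1-b(1-ab+abab-\cdots)a=1-b(1+ab)^{-1}a,
\end{equation*}
so I would guess $v:=1-bua=1-b(1+ab)^{-1}a$ and prove that $v$ works.

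The computation rests on two identities obtained by reading off the two sides of $u(1+ab)=(1+ab)u=1$. Expanding these gives $u+uab=1$ and $u+abu=1$, hence
\begin{equation*}
uab=1-u\qquad\text{and}\qquad abu=1-u.
\end{equation*}
These are exactly the relations needed to collapse the cross terms that arise when one multiplies $1+ba$ against $v$.

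Then I would verify both one-sided products. For the right product,
\begin{equation*}
(1+ba)(1-bua)=1+ba-bua-b(abu)a=1+ba-bua-b(1-u)a=1,
\end{equation*}
using $abu=1-u$; for the left product,
\begin{equation*}
(1-bua)(1+ba)=1+ba-bua-b(uab)a=1+ba-bua-b(1-u)a=1,
\end{equation*}
using $uab=1-u$. Hence $1+ba$ is invertible with $(1+ba)^{-1}=1-b(1+ab)^{-1}a$, and interchanging $a$ and $b$ yields the converse. The only genuinely delicate point is guessing the closed form $1-bua$, and the power-series heuristic above removes even that; there is no conceptual obstacle, and once the candidate is in hand the argument is a short symmetric verification.
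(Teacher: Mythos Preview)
Your argument is correct: the candidate $v=1-b(1+ab)^{-1}a$ is the standard Jacobson--Hua formula, and your two verifications using $uab=abu=1-u$ are clean and valid. The paper itself does not supply a proof of this lemma at all---it merely cites it as a known result from \cite{BAB, DSC}---so there is nothing to compare against; your write-up is exactly the classical proof and would serve perfectly well as a self-contained justification.
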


\begin{lem} (\cite[Theorem $3.2$]{X1}and \cite[Theorem $1.3$]{X2}) \label{Mary}
Let $d\in R$ is regular with $d^{-}\in d{\{1\}}$.
The following conditions are equivalent:\\
$(1)$ $a$ is invertible along $d$;\\
$(2)$ $u=da+1-dd^{-}$ is invertible;\\
$(3)$ $v=ad+1-d^{-}d$ is invertible.\\
In this case,
$a^{\|d}=u^{-1}d=dv^{-1}$.
\end{lem}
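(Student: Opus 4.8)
The plan is to first reformulate the defining conditions of invertibility along $d$ in terms of the two idempotents associated to $d$, then to link conditions $(2)$ and $(3)$ by Jacobson's lemma, and finally to pass between $(1)$ and $(2)$ by producing an explicit unit. Throughout write $p=dd^{-}$ and $q=d^{-}d$, so that $p^{2}=p$, $q^{2}=q$, $pd=d=dq$ and $dd^{-}d=d$. Since $dR=pR$ and $Rd=Rq$, an element $y$ satisfies $yR\subseteq dR$ if and only if $py=y$, and $Ry\subseteq Rd$ if and only if $yq=y$. Hence $y=a^{\|d}$ precisely when
\begin{equation*}
yad=d=day,\qquad py=y,\qquad yq=y.
\end{equation*}
All three parts of the statement will be read off from these reformulated conditions.

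For $(2)\Leftrightarrow(3)$ I would simply rewrite
\begin{equation*}
u=da+1-dd^{-}=1+d(a-d^{-}),\qquad v=ad+1-d^{-}d=1+(a-d^{-})d,
\end{equation*}
and apply Lemma~\ref{J.L} to the factorizations $d\cdot(a-d^{-})$ and $(a-d^{-})\cdot d$: the element $1+d(a-d^{-})$ is invertible if and only if $1+(a-d^{-})d$ is. This is immediate and also explains the symmetry of the statement. For $(2)\Rightarrow(1)$ together with the displayed formula, I would use both units at once. A short computation using $dd^{-}d=d$ gives $ud=dad=dv$, whence $u^{-1}d=dv^{-1}$; call this common element $y$. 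From $ud=dad$ we get $d=u^{-1}dad=(u^{-1}d)ad=yad$, and from $dv=dad$ we get $d=dadv^{-1}=da(dv^{-1})=day$. Since $y=u^{-1}d\in Rd$ we have $yq=y$, and since $y=dv^{-1}\in dR$ we have $py=y$. Thus $y$ meets all four conditions, so $a$ is invertible along $d$ with $a^{\|d}=u^{-1}d=dv^{-1}$.

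The main obstacle is $(1)\Rightarrow(2)$: manufacturing a genuine two-sided unit out of the a priori one-sided data $y=a^{\|d}$. Here I would guess the inverse from the block (Peirce) picture of $u$ relative to $p$ and verify it directly. Setting $g=da-dd^{-}$, so that $u=1+g$, I propose
\begin{equation*}
z=1+yd^{-}(dd^{-}-da)=1-yd^{-}g,
\end{equation*}
and claim $uz=1=zu$. The verification rests on two identities extracted from the hypotheses: first $u\,yd^{-}=dd^{-}$, which uses $day=d$ and $py=y$; and second $yd^{-}dad=d$, which uses $yad=d$ together with $yq=y$ (this is exactly where the condition $Ry\subseteq Rd$ enters, via $y=yd^{-}d$). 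Granting these, one computes $gu=g+g^{2}=da\,g$, then $u\,yd^{-}g=dd^{-}g=g$ to obtain $uz=1$, and $yd^{-}gu=yd^{-}da\,g=g$ to obtain $zu=1$.

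The delicate point, and the step I expect to be fiddly, is the bookkeeping of these products so that the sandwich identity $yd^{-}dad=d$ can be applied at the right moment; everything else is formal manipulation with the idempotent relations $dd^{-}d=d$, $pda=da$ and $dd^{-}da=da$. Once $(1)\Rightarrow(2)$ is secured, the cycle $(1)\Rightarrow(2)\Rightarrow(1)$ together with $(2)\Leftrightarrow(3)$ yields the full equivalence, and the expression $a^{\|d}=u^{-1}d=dv^{-1}$ has already been produced in the course of proving $(2)\Rightarrow(1)$.
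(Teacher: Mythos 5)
Your proof is correct, and a structural point is worth making first: the paper does not prove this lemma at all --- it is imported as a black box from Mary and Patr\'{i}cio (\cite[Theorem 3.2]{X1} and \cite[Theorem 1.3]{X2}) --- so there is no in-paper argument to compare against, and yours stands as a self-contained substitute. I checked the details and they hold. The reduction of the ideal conditions to $dd^{-}y=y$ and $yd^{-}d=y$ is valid in a unital ring (since $y\in yR$); the equivalence $(2)\Leftrightarrow(3)$ via Lemma~\ref{J.L} applied to $u=1+d(a-d^{-})$ and $v=1+(a-d^{-})d$ is immediate; and in $(2)\Rightarrow(1)$ the identity $ud=dad=dv$ does give $y:=u^{-1}d=dv^{-1}$, with $yad=u^{-1}(ud)=d$, $day=(dv)v^{-1}=d$, and the ideal conditions automatic from $y\in dR\cap Rd$. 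The crux, $(1)\Rightarrow(2)$, also verifies: with $g=da-dd^{-}$ one has $uyd^{-}=dayd^{-}+yd^{-}-dd^{-}yd^{-}=dd^{-}$ (from $day=d$ and $dd^{-}y=y$) and $dd^{-}g=g$, so $uz=u-(uyd^{-})g=u-g=1$; while $gu=g+g^{2}=da\,g$ and $yd^{-}dad=(yd^{-}d)ad=yad=d$ give $yd^{-}(gu)=(yd^{-}dad)a-(yd^{-}dad)d^{-}=da-dd^{-}=g$, so $zu=u-g=1$. Two small remarks on presentation rather than substance: in $(2)\Rightarrow(1)$ you use the invertibility of $v$ as well as $u$, which is legitimate only because $(2)\Leftrightarrow(3)$ was established first --- your ordering does this, but it deserves an explicit word; and the final identification $a^{\|d}=y$ tacitly invokes uniqueness of the inverse along $d$ (standard, from Mary \cite{X3}), though strictly you avoid needing it by exhibiting $y$ directly as an inverse of $a$ along $d$. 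Your explicit two-sided inverse $z=1-yd^{-}(da-dd^{-})$ is a clean device; the cited source reaches the same equivalences, and having a direct verification of this kind makes the lemma independent of the external references.
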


\begin{lem} \cite[Theorem $2.3$ and Corollary $3.3$]{CZ} \label{ZHH}
Let $d\in R$ is regular with $d^{-}\in d{\{1\}}$.
The following conditions are equivalent:\\
$(1)$ $a$ is left (resp. right) invertible along $d$;\\
$(2)$ $d\in Rdad$ (resp. $d\in dadR$);\\
$(3)$ $da+1-dd^{-}$ is left (resp. right) invertible;\\
$(4)$ $ad+1-d^{-}d$ is left (resp. right) invertible.\\
\end{lem}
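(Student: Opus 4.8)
The plan is to prove the ``left'' chain of equivalences in full and obtain the ``right'' versions by the symmetric argument (replacing the identity $ud=dad$ by $dv=dad$ throughout, and decomposing along $d^{-}d$ instead of $dd^{-}$). Throughout I set $e=dd^{-}$, an idempotent satisfying $ed=d$ and $eR=dR$, and I abbreviate $u=da+1-dd^{-}$ and $v=ad+1-d^{-}d$. The backbone of the whole argument is the pair of identities $ud=dad$ and $dv=dad$, each of which follows immediately from $dd^{-}d=d$.

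First I would dispose of $(1)\Leftrightarrow(2)$. Since $y\in Ry\subseteq Rd$ forces $y\in Rd$, and conversely $y\in Rd$ gives $Ry\subseteq Rd$, the statement ``$a$ is left invertible along $d$'' reduces to the existence of $y=zd$ with $zdad=yad=d$, which is literally $d\in Rdad$. The right case is dual, using $yR\subseteq dR\Leftrightarrow y\in dR$.

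Next I would connect $(2)$ with the one-sided invertibility conditions. The easy direction $(3)\Rightarrow(2)$ is immediate: if $wu=1$, multiplying $ud=dad$ on the left by $w$ yields $d=wdad\in Rdad$. The reverse direction $(2)\Rightarrow(3)$ is the step I expect to be the main obstacle, because this is where one-sidedness must be tracked carefully. I would handle it by a Peirce decomposition along $e=dd^{-}$: since $(1-e)d=0$, the element $u$ is block upper-triangular with respect to $e$, i.e. $(1-e)ue=0$ and $(1-e)u(1-e)=1-e$, so $u$ is left invertible in $R$ if and only if its corner $eue=edae\in eRe$ is left invertible in the corner ring $eRe$. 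Writing $d=sdad$ from $(2)$ and computing $edae=dadd^{-}$, one checks $s(edae)=e$ and hence $(ese)(edae)=e$, so $ese$ is a left inverse of the corner; the triangular shape then upgrades this to a left inverse of $u$. Tracking the same decomposition for right invertibility (where the lower-left block being zero is exactly what is needed) closes $(2)\Leftrightarrow(3)$.

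Finally, for $(3)\Leftrightarrow(4)$ I would invoke Jacobson's lemma in its one-sided form. The rewritings $u=1+d(a-d^{-})$ and $v=1+(a-d^{-})d$ exhibit $u=1+xy$ and $v=1+yx$ with $x=d$ and $y=a-d^{-}$, so that the explicit identity turning a left inverse $w$ of $1+xy$ into the left inverse $1-ywx$ of $1+yx$ (and the symmetric formula for right inverses) gives the equivalence. Since Lemma~\ref{J.L} is stated only for two-sided invertibility, I would record the one-sided version separately, noting that it follows from the very same computation. Assembling $(1)\Leftrightarrow(2)\Leftrightarrow(3)\Leftrightarrow(4)$ for the left case, and repeating the argument verbatim with $dv=dad$ replacing $ud=dad$ for the right case, completes the proof.
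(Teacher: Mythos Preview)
This lemma is quoted from \cite{CZ} and is not proved in the present paper, so there is no in-paper argument to compare against. Your proof is correct. The identification $(1)\Leftrightarrow(2)$ via $Ry\subseteq Rd\Leftrightarrow y\in Rd$ is exactly right; the identity $ud=dad$ gives $(3)\Rightarrow(2)$ at once; and your Peirce argument for $(2)\Rightarrow(3)$ checks: with $e=dd^{-}$ one has $(1-e)u=1-e$, so $u$ is block-triangular with unit $(1-e)$-corner, and from $d=sdad$ one gets $(ese)(eue)=dd^{-}s(dd^{-}d)add^{-}=dd^{-}(sdad)d^{-}=dd^{-}=e$, whence $eue$ is left invertible in $eRe$ and $u$ is left invertible in $R$. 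The rewriting $u=1+d(a-d^{-})$, $v=1+(a-d^{-})d$ together with the one-sided Jacobson formula $w(1+xy)=1\Rightarrow(1-ywx)(1+yx)=1$ yields $(3)\Leftrightarrow(4)$, and the dual computation with $f=d^{-}d$ and $dv=dad$ handles the ``right'' chain (there the natural link is $(2)\Leftrightarrow(4)$, with $(4)\Leftrightarrow(3)$ again by Jacobson).

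One cosmetic remark: in your parenthetical about the right case you refer to the ``lower-left block being zero''; with the usual ordering $(f,1-f)$ the vanishing block of $v$ is $fv(1-f)$, i.e.\ the upper-right one. The triangular shape is correct either way and the argument goes through, but you may want to align the wording with your chosen ordering.
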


Hartwig et al.\cite[Theorem $1$]{H2} proved that $a\in R^{\#}$ if and only if $a\in Ra^{2}\cap a^{2}R$,
so we obtain that $a\in R^{\co}$ if and only if $a\in Ra^{\ast}a\cap Ra^{2}\cap a^{2}R$ by Lemma~\ref{13-14-inverse} and Lemma~\ref{dual-core-inverse}.
We aim at characterizing the core invertibility by the intersection of two left principal ideals,
let us start with a simple proposition.

\begin{prop} \label{a**a}
Let $a\in R$,
$n\geqslant 1$,
we have the following results:\\
(I) the following conditions are equivalent:\\
\indent $(1)$ $a\in R(a^{\ast})^{n}a$;\\
\indent $(2)$ $a\in Ra^{\ast}a\cap a^{n}R$;\\
\indent $(3)$ $R=$ $^{\circ}\!a\oplus R(a^{\ast})^{n}$;\\
\indent $(4)$ $R=$ $^{\circ}\!a+R(a^{\ast})^{n}$;\\
(II) the following conditions are equivalent:\\
\indent $(1)$ $a\in a(a^{\ast})^{n}R$;\\
\indent $(2)$ $a\in aa^{\ast}R\cap Ra^{n}$;\\
\indent $(3)$ $R=a^{\circ}\oplus (a^{\ast})^{n}R$;\\
\indent $(4)$ $R=a^{\circ}+(a^{\ast})^{n}R$.
\end{prop}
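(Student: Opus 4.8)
The plan is to prove part (I) completely and then read off part (II) by passing to the opposite ring. Since the involution $\ast$ remains an involution on $R^{\mathrm{op}}$, and in $R^{\mathrm{op}}$ the left annihilator of $a$ is $a^{\circ}$, the principal left ideal $R(a^{\ast})^{n}$ is $(a^{\ast})^{n}R$, and left-module direct sums become right-module direct sums in $R$, the statement (II) for $a$ is exactly the statement (I) for $a$ read inside $R^{\mathrm{op}}$. Hence it suffices to establish (I), and I would organize the equivalences as $(2)\Rightarrow(1)$, $(1)\Rightarrow(2)$, $(1)\Rightarrow(3)\Rightarrow(4)\Rightarrow(1)$.

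First I would clear the inexpensive implications. The implication $(3)\Rightarrow(4)$ is immediate. For $(4)\Rightarrow(1)$ I write $1=z+w(a^{\ast})^{n}$ with $za=0$ and right-multiply by $a$, obtaining $a=w(a^{\ast})^{n}a$. Conversely, $(1)\Rightarrow(4)$ follows because $a=x(a^{\ast})^{n}a$ gives $(1-x(a^{\ast})^{n})a=0$, so $1-x(a^{\ast})^{n}\in{}^{\circ}a$ and therefore $R={}^{\circ}a+R(a^{\ast})^{n}$. For $(2)\Rightarrow(1)$ I use $a=ua^{\ast}a$ and $a=a^{n}v$; applying the involution to the latter gives $a^{\ast}=v^{\ast}(a^{\ast})^{n}$, and substitution into the former yields $a=uv^{\ast}(a^{\ast})^{n}a\in R(a^{\ast})^{n}a$.

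The core of the proof is $(1)\Rightarrow(2)$ together with the directness needed in $(1)\Rightarrow(3)$, both powered by a single derived identity. From $a=x(a^{\ast})^{n}a$ the involution gives $a^{\ast}=a^{\ast}a^{n}x^{\ast}$; inserting this into $(a^{\ast})^{n}=(a^{\ast})^{n-1}a^{\ast}$ produces the key relation $(a^{\ast})^{n}=(a^{\ast})^{n}a^{n}x^{\ast}$, whose involution reads $a^{n}=x(a^{\ast})^{n}a^{n}$. Now $a=x(a^{\ast})^{n-1}(a^{\ast}a)$ shows $a\in Ra^{\ast}a$, while substituting the key relation back into (1) gives $a=x(a^{\ast})^{n}a^{n}x^{\ast}a=[x(a^{\ast})^{n}a^{n}]x^{\ast}a=a^{n}x^{\ast}a\in a^{n}R$; together these give (2). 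For directness, any $y\in{}^{\circ}a\cap R(a^{\ast})^{n}$ can be written $y=s(a^{\ast})^{n}$ with $ya=0$, and then $y=s(a^{\ast})^{n}a^{n}x^{\ast}=(s(a^{\ast})^{n}a)a^{n-1}x^{\ast}=(ya)a^{n-1}x^{\ast}=0$, so the sum from $(1)\Rightarrow(4)$ is in fact direct and (3) holds.

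The step I expect to be the main obstacle is extracting the right-hand membership $a\in a^{n}R$ from the relation $a=x(a^{\ast})^{n}a$, which on its face only governs left multiplication by $a$. The unlocking device is to apply the involution and promote the hypothesis to $(a^{\ast})^{n}=(a^{\ast})^{n}a^{n}x^{\ast}$, that is, to a factorization of $a^{n}$ that behaves like a one-sided unit; this single identity simultaneously yields $a\in a^{n}R$ and the triviality of ${}^{\circ}a\cap R(a^{\ast})^{n}$, after which the remaining bookkeeping is routine.
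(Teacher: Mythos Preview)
Your proof is correct and follows essentially the same route as the paper's. The only cosmetic difference is in $(1)\Rightarrow(2)$: the paper names $a^{n-1}x^{\ast}$ as a $\{1,3\}$-inverse of $a$ via Lemma~\ref{13-14-inverse} and then uses $a=aa^{(1,3)}a=a^{n}x^{\ast}a$, whereas you derive the identical formula $a=a^{n}x^{\ast}a$ by bare-hands involution and substitution through your ``key relation'' $(a^{\ast})^{n}=(a^{\ast})^{n}a^{n}x^{\ast}$; the directness argument and the remaining implications match the paper almost line for line, and your opposite-ring reduction for (II) is exactly what the paper's ``similar to (I)'' means.
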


\begin{proof}
(I)
$(1)\Rightarrow (2).$ It is clear to see that $a\in Ra^{\ast}a$ follows from $a\in R(a^{\ast})^{n}a$,
and there exists $r\in R$ such that
\begin{equation*}
\begin{split}
    a=r(a^{\ast})^{n}a=r(a^{\ast})^{n-1}a^{\ast}a,
\end{split}
\end{equation*}
so $a\in R^{\{1,3\}}$ and $(r(a^{\ast})^{n-1})^{\ast}=a^{n-1}r^{\ast}\in a\{1,3\}$ by Lemma~\ref{13-14-inverse}.
Moreover,
\begin{equation*}
\begin{split}
    a=aa^{(1,3)}a=a(a^{n-1}r^{\ast})a=a^{n}r^{\ast}a\in a^{n}R.
\end{split}
\end{equation*}
Thus $a\in Ra^{\ast}a\cap a^{n}R$.

$(2)\Rightarrow (1).$ Suppose $a\in Ra^{\ast}a\cap a^{n}R$,
there exist $s,t\in R$ such that $a=sa^{\ast}a=a^{n}t$.
Thus we get
\begin{equation*}
\begin{split}
a=sa^{\ast}a=s(a^{n}t)^{\ast}a=st^{\ast}(a^{\ast})^{n}a\in R(a^{\ast})^{n}a.
\end{split}
\end{equation*}

$(1)\Rightarrow (3).$ Assume that $a=s(a^{\ast})^{n}a$ for some $s\in R$,
which gives that $1-s(a^{\ast})^{n}\in$ $^{\circ}\!a$.
Since we can write $r$ as $r=r(1-s(a^{\ast})^{n})+rs(a^{\ast})^{n}$ for any $r\in R$,
where $r(1-s(a^{\ast})^{n})\in$ $^{\circ}\!a$ and $rs(a^{\ast})^{n}\in R(a^{\ast})^{n}$,
thus $R=$ $^{\circ}\!a+R(a^{\ast})^{n}$.

If $x\in R(a^{\ast})^{n}\cap$ $^{\circ}\!a$,
then $xa=0$ and $x=t(a^{\ast})^{n}$ for some $t\in R$.
Moreover,
\begin{equation*}
\begin{split}
    x
    &~=t(a^{\ast})^{n}=t(a^{\ast})^{n-1}a^{\ast}=t(a^{\ast})^{n-1}(s(a^{\ast})^{n}a)^{\ast}\\
    &~=t(a^{\ast})^{n}a^{n}s^{\ast}=xa^{n}s^{\ast}=0.
\end{split}
\end{equation*}
Hence $R=$ $^{\circ}\!a\oplus R(a^{\ast})^{n}$.

$(3)\Rightarrow (4)$ is trivial.

$(4)\Rightarrow (1).$ Since $a\in Ra=($ $^{\circ}\!a+R(a^{\ast})^{n})a\subseteq R(a^{\ast})^{n}a$,
which gives the condition $(1)$.

(II) The proof is similar to the proof of (I).
\end{proof}

Using Proposition~\ref{a**a},
we obtain the following new characterizations of core and dual core inverses which will be useful in the upcoming results.

\begin{thm}\label{core-inverse 2}
Let $a\in R$,
$n\geqslant 2$,
we have the following results:\\
$(1)$ $a\in R^{\co}$ if and only if $a\in R(a^{\ast})^{n}a\cap Ra^{n}$.
In this case,
$a^{\co}=a^{n-1}s^{\ast}$ for some $s\in R$ such that $a=s(a^{\ast})^{n}a$; \\
$(2)$ $a\in R_{\co}$ if and only if $a\in a(a^{\ast})^{n}R\cap a^{n}R$.
In this case,
$a_{\co}=t^{\ast}a^{n-1}$ for some $t\in R$ such that $a=a(a^{\ast})^{n}t$. \\
\end{thm}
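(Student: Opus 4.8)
The plan is to prove part (1) in full and obtain part (2) by the dual argument. The starting point is the observation recorded just before Proposition~\ref{a**a}, namely that $a\in R^{\co}$ if and only if $a\in Ra^{\ast}a\cap Ra^{2}\cap a^{2}R$; this combines the $\{1,3\}$-characterization of Lemma~\ref{13-14-inverse} with the group-and-$\{1,3\}$ decomposition of Lemma~\ref{dual-core-inverse} and Hartwig's criterion $a\in R^{\#}\iff a\in Ra^{2}\cap a^{2}R$ from \cite{H2}. My goal is to show that the condition $a\in R(a^{\ast})^{n}a\cap Ra^{n}$ is equivalent to this, and then to read off the stated formula.

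First I would invoke Proposition~\ref{a**a}(I), which gives $a\in R(a^{\ast})^{n}a \iff a\in Ra^{\ast}a\cap a^{n}R$. Hence the hypothesis $a\in R(a^{\ast})^{n}a\cap Ra^{n}$ is equivalent to $a\in Ra^{\ast}a\cap a^{n}R\cap Ra^{n}$. The key reduction is then the claim that, for $n\geqslant 2$, $a\in a^{n}R\cap Ra^{n}$ holds if and only if $a\in R^{\#}$. The forward direction is immediate: from $a=a^{n}u$ with $n\geqslant 2$ we get $a\in a^{2}R$, and symmetrically $a\in Ra^{2}$, so Hartwig's criterion yields $a\in R^{\#}$. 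For the converse, if $a\in R^{\#}$ then, using $aa^{\#}=a^{\#}a$ and idempotency of $aa^{\#}$, one has $a=a^{n}(a^{\#})^{n-1}=(a^{\#})^{n-1}a^{n}$, so $a\in a^{n}R\cap Ra^{n}$. Combining, $a\in R(a^{\ast})^{n}a\cap Ra^{n}$ is equivalent to $a\in Ra^{\ast}a\cap R^{\#}$, i.e.\ to $a\in R^{\{1,3\}}\cap R^{\#}$, which by Lemma~\ref{dual-core-inverse} is exactly $a\in R^{\co}$.

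For the representation I would extract an explicit $\{1,3\}$-inverse from the computation already appearing in the proof of Proposition~\ref{a**a}: if $a=s(a^{\ast})^{n}a$, then $a=s(a^{\ast})^{n-1}a^{\ast}a$, so by Lemma~\ref{13-14-inverse} the element $a^{n-1}s^{\ast}=(s(a^{\ast})^{n-1})^{\ast}$ is a $\{1,3\}$-inverse of $a$. Substituting into the formula $a^{\co}=a^{\#}aa^{(1,3)}$ of Lemma~\ref{dual-core-inverse} gives $a^{\co}=a^{\#}a\,a^{n-1}s^{\ast}=a^{\#}a^{n}s^{\ast}$, and since $n\geqslant 2$ the identity $a^{\#}a^{n}=a^{n-1}$ collapses this to $a^{\co}=a^{n-1}s^{\ast}$, as claimed.

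Part (2) follows by the same scheme, replacing Proposition~\ref{a**a}(I) by (II), the $\{1,3\}$-characterization in Lemma~\ref{13-14-inverse} by its $\{1,4\}$-counterpart, and the formula $a^{\co}=a^{\#}aa^{(1,3)}$ by $a_{\co}=a^{(1,4)}aa^{\#}$, now using $a^{n}a^{\#}=a^{n-1}$. The main obstacle is the power-reduction step: one must see that passing from $a^{2}$ to $a^{n}$ costs nothing for group invertibility (this is precisely where $n\geqslant 2$ is needed) and, for the formula, that the surplus powers are absorbed by $a^{\#}a^{n}=a^{n-1}$; the remainder is routine bookkeeping with the cited lemmas.
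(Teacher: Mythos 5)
Your proposal is correct and follows essentially the same route as the paper: reduce $a\in R(a^{\ast})^{n}a\cap Ra^{n}$ to $a\in Ra^{\ast}a\cap a^{n}R\cap Ra^{n}$ via Proposition~\ref{a**a}, identify $a^{n}R\cap Ra^{n}$ with group invertibility for $n\geqslant 2$, conclude via Lemma~\ref{13-14-inverse} and Lemma~\ref{dual-core-inverse}, and read off $a^{\co}=a^{\#}aa^{(1,3)}=a^{n-1}s^{\ast}$. The only cosmetic difference is that you verify $a\in a^{n}R\cap Ra^{n}$ for group invertible $a$ by exhibiting $a=a^{n}(a^{\#})^{n-1}$, whereas the paper iterates $a=a^{2}x=a^{3}x^{2}=\cdots$ from Lemma~\ref{group-inverse}; both are valid.
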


\begin{proof}
$(1)$ Since $a\in R^{\#}$ if and only if $a\in a^{2}R\cap Ra^{2}$ by Lemma~\ref{group-inverse},
thus $a=a^{2}x=ya^{2}$ for some $x, y\in R$,
further we have
\begin{equation*}
\begin{split}
    a=a^{2}x=a(a^{2}x)x=a^{3}x=\cdots =a^{n}x\in a^{n}R
\end{split}
\end{equation*}
and
\begin{equation*}
\begin{split}
    a=ya^{2}=y(ya^{2})a=ya^{3}=\cdots =ya^{n}\in Ra^{n},
\end{split}
\end{equation*}
where $n\geqslant 2$.
Hence it is easy to deduce that $a\in R^{\#}$ if and only if $a\in a^{n}R\cap Ra^{n}$ for $n\geqslant 2$.

Applying Proposition~\ref{a**a},
$a\in R(a^{\ast})^{n}a\cap Ra^{n}$ if and only if $a\in Ra^{\ast}a\cap a^{n}R\cap Ra^{n}$,
which shows that $a\in R(a^{\ast})^{n}a\cap Ra^{n}$ if and only if $a\in Ra^{\ast}a\cap R^{\#}$.
Since $a\in Ra^{\ast}a$ is equivalent to $a\in R^{\{1,3\}}$ by Lemma~\ref{13-14-inverse},
thus $a\in R(a^{\ast})^{n}a\cap Ra^{n}$ if and only if $a\in R^{\co}$ by Lemma~\ref{dual-core-inverse}.

Next,
we give the representation of $a^{\co}$.
Since $a\in R(a^{\ast})^{n}a\cap Ra^{n}$,
there exists $s\in R$ such that $a=s(a^{\ast})^{n}a$.
By Lemma~\ref{13-14-inverse},
we have
\begin{equation*}
\begin{split}
    (s(a^{\ast})^{n-1})^{\ast}=a^{n-1}s^{\ast}\in a\{1,3\}.
\end{split}
\end{equation*}
Using Lemma~\ref{dual-core-inverse},
we obtain
\begin{equation*}
\begin{split}
    a^{\co}=a^{\#}aa^{(1,3)}=a^{\#}a(a^{n-1}s^{\ast})=a^{n-1}s^{\ast}.
\end{split}
\end{equation*}

$(2)$ Similarly as $(1)$.
\end{proof}

It is easy to see that Theorem~\ref{core-inverse 2} is also true in a semigroup by the proof of it.
From Lemma~\ref{dual-core-inverse},
we can obtain that $a\in R^{\co}\cap R_{\co}$ if and only if $a\in R^{\dagger}\cap R^{\#}$.
Therefore,
we have the following result by applying Proposition~\ref{a**a} and Theorem~\ref{core-inverse 2}.

\begin{thm}\label{cap1}
Let $a\in R$,
$n\geqslant 2$.
The following conditions are equivalent:\\
$(1)$ $a\in R^{\dagger}\cap R^{\#}$;\\
$(2)$ $a\in R^{\co}\cap R_{\co}$;\\
$(3)$ $a\in a(a^{\ast})^{n}R\cap R(a^{\ast})^{n}a$;\\
$(4)$ $R=$ $^{\circ}\!a\oplus R(a^{\ast})^{n}$, $R=a^{\circ}\oplus (a^{\ast})^{n}R$;\\
$(5)$ $R=$ $^{\circ}\!a+R(a^{\ast})^{n}$, $R=a^{\circ}+(a^{\ast})^{n}R$;\\
$(6)$ $R=$ $^{\circ}\!a\oplus R(a^{\ast})^{n}$, $R=a^{\circ}+(a^{\ast})^{n}R$;\\
$(7)$ $R=$ $^{\circ}\!a+R(a^{\ast})^{n}$, $R=a^{\circ}\oplus (a^{\ast})^{n}R$.\\
In this case,
\begin{equation*}
\begin{split}
    a^{\co}
    &~= a^{n-1}s^{\ast},\\
    a_{\co}
    &~= t^{\ast}a^{n-1},\\
    a^{\dagger}
    &~=t^{\ast}a^{2n-1}s^{\ast},\\
    a^{\#}
    &~=(a^{n-1}s^{\ast})^{2}a=a(t^{\ast}a^{n-1})^{2},
\end{split}
\end{equation*}
where $a= s(a^{\ast})^{n}a= a(a^{\ast})^{n}t$ for some $s,t\in R$.
\end{thm}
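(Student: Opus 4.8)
The plan is to prove Theorem~\ref{cap1} by first reducing the equivalences (1)--(7) to a single pair of symmetric conditions, and then building the explicit formulas on top of those equivalences. The equivalence of (1) and (2) is already supplied by the remark preceding the theorem (from Lemma~\ref{dual-core-inverse}), so I need not reprove it. The real work is to tie (2) to condition (3), and to show that (3)--(7) are all mutually equivalent reformulations.

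First I would establish $(2)\Leftrightarrow(3)$ by invoking Theorem~\ref{core-inverse 2} twice. By part~(1) of that theorem, $a\in R^{\co}$ iff $a\in R(a^{\ast})^{n}a\cap Ra^{n}$, and by part~(2), $a\in R_{\co}$ iff $a\in a(a^{\ast})^{n}R\cap a^{n}R$. Intersecting, $a\in R^{\co}\cap R_{\co}$ iff $a$ lies in all four ideals $R(a^{\ast})^{n}a$, $Ra^{n}$, $a(a^{\ast})^{n}R$, and $a^{n}R$. The point I would need to make is that the two membership conditions $a\in a(a^{\ast})^{n}R$ and $a\in R(a^{\ast})^{n}a$ appearing in (3) already force the other two. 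Indeed, from $a\in R(a^{\ast})^{n}a$ one gets $a\in a^{n}R$ by the $(1)\Rightarrow(2)$ direction of Proposition~\ref{a**a}(I); symmetrically $a\in a(a^{\ast})^{n}R$ yields $a\in Ra^{n}$ by Proposition~\ref{a**a}(II). So (3) is genuinely equivalent to the full four-fold intersection, hence to (2).

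Next I would dispatch the equivalences $(3)\Leftrightarrow(4)\Leftrightarrow(5)\Leftrightarrow(6)\Leftrightarrow(7)$ purely from Proposition~\ref{a**a}. That proposition shows $a\in R(a^{\ast})^{n}a$ is equivalent to each of $R={}^{\circ}\!a\oplus R(a^{\ast})^{n}$ and $R={}^{\circ}\!a+R(a^{\ast})^{n}$ (conditions (I)(3) and (I)(4)); dually, $a\in a(a^{\ast})^{n}R$ is equivalent to each of $R=a^{\circ}\oplus(a^{\ast})^{n}R$ and $R=a^{\circ}+(a^{\ast})^{n}R$ (conditions (II)(3) and (II)(4)). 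Since (4)--(7) are exactly the four ways of pairing a left-sided direct-sum-or-sum decomposition with a right-sided one, each is equivalent to the conjunction of ``$a\in R(a^{\ast})^{n}a$'' and ``$a\in a(a^{\ast})^{n}R$,'' which is precisely (3). This is a routine bookkeeping step once Proposition~\ref{a**a} is in hand.

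Finally, for the expressions, I would extract the witnesses $s,t$ from condition (3): write $a=s(a^{\ast})^{n}a$ and $a=a(a^{\ast})^{n}t$. Theorem~\ref{core-inverse 2} then gives $a^{\co}=a^{n-1}s^{\ast}$ and $a_{\co}=t^{\ast}a^{n-1}$ directly. For $a^{\#}$ I would apply Lemma~\ref{core->group}, which yields $a^{\#}=(a^{\co})^{2}a=(a^{n-1}s^{\ast})^{2}a$ from the core inverse and $a^{\#}=a(a_{\co})^{2}=a(t^{\ast}a^{n-1})^{2}$ from the dual core inverse. The formula I expect to be the main obstacle is $a^{\dagger}=t^{\ast}a^{2n-1}s^{\ast}$: here I would use that for an EP-type element ($a\in R^{\dagger}\cap R^{\#}$) the Moore--Penrose inverse can be assembled from the one-sided inverses, plausibly via an identity such as $a^{\dagger}=a_{\co}\,a\,a^{\co}$ (a $\{1,4\}$-inverse times $a$ times a $\{1,3\}$-inverse). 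Substituting $a_{\co}=t^{\ast}a^{n-1}$ and $a^{\co}=a^{n-1}s^{\ast}$ and using $a^{n-1}aa^{n-1}=a^{2n-1}$ would collapse this to $t^{\ast}a^{2n-1}s^{\ast}$; verifying that this product genuinely satisfies all four Penrose equations (rather than just producing a candidate) is the one place where I anticipate needing a careful, non-formal computation.
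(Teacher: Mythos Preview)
Your proposal is correct and matches the paper's approach almost exactly: the equivalences (1)--(7) are obtained from Proposition~\ref{a**a} and Theorem~\ref{core-inverse 2}, and the formulas for $a^{\co}$, $a_{\co}$, $a^{\#}$ come from Theorem~\ref{core-inverse 2} together with Lemma~\ref{core->group}. The only place you hesitate unnecessarily is the formula for $a^{\dagger}$: you do not need to verify the four Penrose equations by hand, because the identity $a^{\dagger}=a^{(1,4)}\,a\,a^{(1,3)}$ is already recorded in Lemma~\ref{M-P1}, and Lemma~\ref{13-14-inverse} shows directly (from $a=s(a^{\ast})^{n}a$ and $a=a(a^{\ast})^{n}t$) that $a^{n-1}s^{\ast}\in a\{1,3\}$ and $t^{\ast}a^{n-1}\in a\{1,4\}$, whence $a^{\dagger}=(t^{\ast}a^{n-1})a(a^{n-1}s^{\ast})=t^{\ast}a^{2n-1}s^{\ast}$.
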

\begin{proof}
The equivalences of seven conditions above and the representations of $a^{\co}$ and $a_{\co}$ can be easily obtained by Proposition~\ref{a**a} and Theorem~\ref{core-inverse 2}.
We will give the representations of $a^{\dagger}$ and $a^{\#}$ in the following.

Suppose $a= s(a^{\ast})^{n}a= a(a^{\ast})^{n}t$ for some $s, t\in R$,
so $a^{n-1}s^{\ast}\in a\{1,3\}$ and $t^{\ast}a^{n-1}\in a\{1,4\}$ follow from Lemma~\ref{13-14-inverse}.
Applying Lemma~\ref{M-P1} and Lemma~\ref{core->group},
we get
\begin{equation*}
\begin{split}
    a^{\dagger}
    &~=a^{(1,4)}aa^{(1,3)}=(t^{\ast}a^{n-1})a(a^{n-1}s^{\ast})=t^{\ast}a^{2n-1}s^{\ast},\\
    a^{\#}
    &~=(a^{\co})^{2}a=(a^{n-1}s^{\ast})^{2}a=a(a_{\co})^{2}=a(t^{\ast}a^{n-1})^{2}.
\end{split}
\end{equation*}
\end{proof}

\begin{rem}
When taking $n=1$,
each of these five conditions $(3)$, $(4)$, $(5)$, $(6)$ and $(7)$ in Theorem~\ref{cap1} is equivalent to $a\in R^{\dagger}$ $($see \cite{Han,P}$)$.
\end{rem}

\section{Characterizing core (or dual core) inverses by Hermitian elements or projections in a ring}\label{a}
In this section,
we present some new equivalent conditions for the existence of core inverses.
Before we start,
look at the following two known results.

\begin{thm}\cite[Theorem $3.6$ and $3.7$]{XSZ2}
Let $a\in R$ and $n\geqslant 1$,
the following conditions are equivalent: \\
$(1)$ $a\in R^{\dagger}$;\\
$(2)$ there exists a projection (or Hermitian element) $p$ such that $pa=0, u=(aa^{\ast})^{n}+p\in R^{-1}$;\\
$(3)$ there exists a projection (or Hermitian element) $q$ such that $aq=0, v=(a^{\ast}a)^{n}+q\in R^{-1}$.\\
In this case,
\begin{equation*}
\begin{split}
a^{\dagger}=a^{\ast}u^{-1}(aa^{\ast})^{2n-1}(u^{-1})^{\ast}=(v^{-1})^{\ast}(a^{\ast}a)^{2n-1}v^{-1}a^{\ast}.
\end{split}
\end{equation*}
\end{thm}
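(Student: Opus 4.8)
The plan is to prove the chain $(1)\Leftrightarrow(2)$ in full and to obtain $(1)\Leftrightarrow(3)$ for free by duality: applying the equivalence $(1)\Leftrightarrow(2)$ to $a^{\ast}$ in place of $a$, and using that $a\in R^{\dagger}$ iff $a^{\ast}\in R^{\dagger}$ (with $(a^{\ast})^{\dagger}=(a^{\dagger})^{\ast}$), condition $(2)$ for $a^{\ast}$ asks for a Hermitian $q$ with $qa^{\ast}=0$ and $(a^{\ast}a)^{n}+q$ invertible; since $qa^{\ast}=0$ is the adjoint of $aq=0$ when $q$ is Hermitian, this is precisely condition $(3)$ for $a$. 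So the real content is $(1)\Leftrightarrow(2)$ together with the displayed formula. Throughout I will assume only that $p$ is \emph{Hermitian} when proving sufficiency, and I will \emph{produce} a genuine projection when proving necessity; this is exactly what lets the statement read simultaneously for ``projection'' and for ``Hermitian element''.

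For sufficiency $(2)\Rightarrow(1)$ I would argue directly. From $pa=0$ we get $ua=(aa^{\ast})^{n}a+pa=(aa^{\ast})^{n}a$, hence $a=u^{-1}(aa^{\ast})^{n}a=\big(u^{-1}(aa^{\ast})^{n-1}\big)\,aa^{\ast}a\in Raa^{\ast}a$ for every $n\geqslant1$. By the equivalence $a\in R^{\dagger}\Leftrightarrow a\in Raa^{\ast}a$ of Lemma~\ref{M-P1}, this gives $a\in R^{\dagger}$. This step uses only $pa=0$ and the invertibility of $u$, so it handles the Hermitian case at once.

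For necessity $(1)\Rightarrow(2)$, given $a\in R^{\dagger}$ I would take $p=1-aa^{\dagger}$. Since $aa^{\dagger}$ is a projection, so is $p$, and $pa=a-aa^{\dagger}a=0$. The crux is that $u=(aa^{\ast})^{n}+p$ is invertible, and I would prove this by exhibiting its inverse explicitly as $\big((a^{\dagger})^{\ast}a^{\dagger}\big)^{n}+(1-aa^{\dagger})$. This rests on the two identities $aa^{\ast}\,(a^{\dagger})^{\ast}a^{\dagger}=(a^{\dagger})^{\ast}a^{\dagger}\,aa^{\ast}=aa^{\dagger}$, i.e.\ $aa^{\ast}$ and $(a^{\dagger})^{\ast}a^{\dagger}$ are mutually inverse on the corner determined by the projection $e:=aa^{\dagger}$, which I can verify from $aa^{\dagger}a=a$ and the self-adjointness of $aa^{\dagger}$ and $a^{\dagger}a$. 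Because $(aa^{\ast})^{n}e=(aa^{\ast})^{n}=e(aa^{\ast})^{n}$ and the two summands of $u$ live in the complementary corners $eRe$ and $(1-e)R(1-e)$, multiplying $u$ by the candidate inverse and applying these identities collapses everything to $e+(1-e)=1$.

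For the representation I would first record what $p$ being Hermitian forces: $a^{\ast}p=(pa)^{\ast}=0$, $u=u^{\ast}$ (so $(u^{-1})^{\ast}=u^{-1}$), and $aa^{\ast}p=paa^{\ast}=0$, whence $aa^{\ast}$ commutes with $u$ and with $u^{-1}$. From $a^{\ast}u=a^{\ast}(aa^{\ast})^{n}$ I then extract the master relation $a^{\ast}u^{-1}(aa^{\ast})^{n}=a^{\ast}$ together with its adjoint $(aa^{\ast})^{n}u^{-1}a=a$. Writing $X=a^{\ast}u^{-1}(aa^{\ast})^{2n-1}(u^{-1})^{\ast}$, conditions $(3)$ and $(4)$ hold automatically since $aX=u^{-1}(aa^{\ast})^{2n}u^{-1}$ and $Xa$ are manifestly self-adjoint, while $(1)$ $aXa=a$ and $(2)$ $XaX=X$ follow by commuting the powers of $aa^{\ast}$ through $u^{-1}$ and applying the master relation repeatedly. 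I expect the main obstacle to lie in the necessity direction: guessing and checking the explicit inverse of $u$, and keeping the bookkeeping straight in the verification of $XaX=X$. The commutativity of $aa^{\ast}$ with $u^{-1}$ is precisely the lever that prevents these computations from exploding.
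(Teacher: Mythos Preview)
The paper does not prove this theorem; it is quoted from \cite{XSZ2} as background and motivation for Section~3, with no argument supplied, so there is nothing in the paper to compare your proposal against.

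For what it is worth, your argument is correct. The sufficiency $(2)\Rightarrow(1)$ via $a=u^{-1}(aa^{\ast})^{n}a\in Raa^{\ast}a$ together with Lemma~\ref{M-P1} is clean. For necessity, the choice $p=1-aa^{\dagger}$ with explicit inverse $((a^{\dagger})^{\ast}a^{\dagger})^{n}+1-aa^{\dagger}$ checks out: the key identities $aa^{\ast}\cdot(a^{\dagger})^{\ast}a^{\dagger}=(a^{\dagger})^{\ast}a^{\dagger}\cdot aa^{\ast}=aa^{\dagger}$ hold as you claim, and the cross terms vanish because $a^{\ast}(1-aa^{\dagger})=0$ and $(1-aa^{\dagger})(a^{\dagger})^{\ast}=0$. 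For the formula, your observations that $u$ is Hermitian, that $aa^{\ast}$ commutes with $u$ (hence with $u^{-1}$), and that the master relation $a^{\ast}(aa^{\ast})^{n}u^{-1}=a^{\ast}$ holds are exactly what is needed; with these in hand one simplifies $X=a^{\ast}u^{-1}(aa^{\ast})^{2n-1}u^{-1}=a^{\ast}(aa^{\ast})^{n-1}u^{-1}$, after which $aXa=a$ and $XaX=X$ are one-line checks and the self-adjointness of $aX$ and $Xa$ is immediate. The duality reduction of $(3)$ to $(2)$ applied to $a^{\ast}$ is also sound.
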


\begin{thm}\cite[Proposition $8.24$]{KPS}
Let $a\in R$,
the following conditions are equivalent: \\
$(1)$ $a\in R^{\#}$;\\
$(2)$ there exists an idempotent $p$ such that $pa=ap=0, u=a+p\in R^{-1}$.\\
In this case,
\begin{equation*}
\begin{split}
a^{\#}=u^{-1}(1-p).
\end{split}
\end{equation*}
\end{thm}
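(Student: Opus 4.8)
The plan is to prove the two implications separately, in both cases producing explicit formulas and leaning on Lemma~\ref{group-inverse}, which reduces group invertibility to the solvability of $a=a^{2}x=ya^{2}$ and simultaneously hands over a formula for $a^{\#}$. No deep machinery is needed; the whole statement should fall out of the defining equations $axa=a$, $xax=x$, $ax=xa$ of the group inverse together with one well-chosen idempotent.

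For the direction $(2)\Rightarrow(1)$ I would start from an idempotent $p$ with $pa=ap=0$ and $u=a+p$ a unit. The first step is to record $ua=(a+p)a=a^{2}$ and $au=a(a+p)=a^{2}$, both immediate from $pa=ap=0$. Multiplying these by $u^{-1}$ gives $a=u^{-1}a^{2}=a^{2}u^{-1}$, so Lemma~\ref{group-inverse} applies with $x=y=u^{-1}$ and yields $a\in R^{\#}$ with $a^{\#}=u^{-1}au^{-1}$. To reach the stated formula I would then extract $up=(a+p)p=p$ and $pu=p$ from $ap=pa=0$ and $p^{2}=p$, whence $u^{-1}p=p=pu^{-1}$. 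Writing $a=u-p$ and simplifying $u^{-1}au^{-1}=u^{-1}(u-p)u^{-1}=u^{-1}-u^{-1}pu^{-1}=u^{-1}-p$ identifies $a^{\#}=u^{-1}-p=u^{-1}(1-p)$.

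For $(1)\Rightarrow(2)$ the candidate is the idempotent $p=1-aa^{\#}$. I would first verify $p^{2}=p$, $pa=(1-aa^{\#})a=a-aa^{\#}a=0$, and $ap=a-a^{2}a^{\#}=0$, where $a^{2}a^{\#}=a$ follows from $aa^{\#}=a^{\#}a$ and $aa^{\#}a=a$. The decisive step is to show $u=a+1-aa^{\#}$ is a unit: here I would propose $v=a^{\#}+1-aa^{\#}$ as an explicit two-sided inverse and confirm $uv=vu=1$ by expanding and applying the reduction rules $a(aa^{\#})=a$, $(aa^{\#})a^{\#}=a^{\#}$ and $(aa^{\#})^{2}=aa^{\#}$. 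Since $1-p=aa^{\#}$, a final short computation $u^{-1}(1-p)=(a^{\#}+1-aa^{\#})aa^{\#}=a^{\#}$ recovers the formula and closes the loop.

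Every computation above is mechanical once Lemma~\ref{group-inverse} and the three group-inverse identities are granted; the only point demanding a genuine idea is the invertibility of $u$ in $(1)\Rightarrow(2)$. The hard part is therefore guessing the correct inverse $v=a^{\#}+1-aa^{\#}$ (equivalently, recognizing $1-aa^{\#}$ as the complementary idempotent that splits $R$ into the ``invertible part'' carrying $a^{\#}$ and the annihilated part). After $v$ is produced the remaining verifications are purely routine, so I expect the proof to be short and self-contained.
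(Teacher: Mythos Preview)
The paper does not supply a proof of this statement: it is quoted verbatim from \cite[Proposition~8.24]{KPS} as background, so there is no in-paper argument to compare against. Your proposal is nonetheless correct and complete. Both implications are handled cleanly: the $(2)\Rightarrow(1)$ step correctly extracts $a=u^{-1}a^{2}=a^{2}u^{-1}$ and invokes Lemma~\ref{group-inverse}, and your derivation of $u^{-1}pu^{-1}=p$ from $up=pu=p$ is sound; the $(1)\Rightarrow(2)$ step with $p=1-aa^{\#}$ and explicit inverse $v=a^{\#}+1-aa^{\#}$ is the standard computation.

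It is worth noting that your key idea---guessing the two-sided inverse $a^{\#}+1-aa^{\#}$ of $a+1-aa^{\#}$---is precisely the mechanism the paper exploits elsewhere: the proof of Theorem~\ref{n-core-inverse} verifies $(a+1-aa^{\co})(a^{\co}+1-a^{\co}a)=1$, and Theorem~\ref{EP} uses $(a^{n}+1-a^{\#}a)((a^{\#})^{n}+1-a^{\#}a)=1$. So your approach is fully in the spirit of the paper's later arguments, even though the paper itself defers this particular statement to the reference.
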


Inspired by this two theorems,
we extend the same reasoning to the core and dual core inverses.
Moreover,
we give their representations.

\begin{thm} \label{n-core-inverse}
Let $a\in R$,
$n\geqslant 2$.
The following conditions are equivalent: \\
$(1)$ $a\in R^{\co}$; \\
$(2)$ there exists a unique projection $p$ such that $pa=0$, $u=a^{n}+p\in R^{-1}$;\\
$(3)$ there exists a Hermitian element $p$ such that $pa=0$, $u=a^{n}+p\in R^{-1}$.\\
In this case,
\begin{equation*}
\begin{split}
    a^{\co}=a^{n-1}u^{-1}.
\end{split}
\end{equation*}
\end{thm}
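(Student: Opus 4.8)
The plan is to prove the cycle of implications $(1)\Rightarrow(2)\Rightarrow(3)\Rightarrow(1)$, leveraging the characterization of core invertibility already established in Theorem~\ref{core-inverse 2}, namely that $a\in R^{\co}$ if and only if $a\in R(a^{\ast})^{n}a\cap Ra^{n}$, together with the representation $a^{\co}=a^{n-1}s^{\ast}$ where $a=s(a^{\ast})^{n}a$. The projection $p$ I expect to produce in $(2)$ is $p=1-aa^{\co}$; this is natural because $a^{\co}$ satisfies $aa^{\co}=aa^{(1,3)}$ (an idempotent with the appropriate Hermitian property coming from the $\{1,3\}$-inverse part), so $1-aa^{\co}$ should be a projection annihilating $a$ on the left.

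First I would carry out $(1)\Rightarrow(2)$. Assuming $a\in R^{\co}$, set $p=1-aa^{\co}$. Since $a\in R^{\co}$ gives $a\in R^{\#}\cap R^{\{1,3\}}$ by Lemma~\ref{dual-core-inverse}, the element $aa^{\co}$ is a Hermitian idempotent (it equals $aa^{(1,3)}$), so $p$ is a projection. The relation $pa=0$ follows from $aa^{\co}a=a$. For invertibility of $u=a^{n}+p$, I would exhibit an explicit inverse. Because $a\in R^{\#}$, powers of $a$ interact cleanly with $a^{\#}$; using $a^{\co}=a^{\#}aa^{(1,3)}$ and $a^{\#}=(a^{\co})^{2}a$ from Lemma~\ref{core->group}, one expects $u^{-1}$ to be built from $(a^{\#})^{n}$ (equivalently $(a^{\co})^{n}$-type expressions) plus $p$. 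The key computation is to verify $u\cdot u^{-1}=u^{-1}\cdot u=1$ by splitting $1=aa^{\co}+p$ and checking the action of $u$ on each of the two complementary pieces $aa^{\co}R$ and $pR$. Uniqueness of the projection $p$ is then argued by showing any projection $q$ with $qa=0$ and $a^{n}+q$ invertible must satisfy $q=1-aa^{\co}$; this typically follows because $qa=0$ forces $q$ to act as the identity on the complement of $aR$, and invertibility of $a^{n}+q$ pins down that complement.

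The implication $(2)\Rightarrow(3)$ is immediate, since every projection is a Hermitian element. The substantive direction is $(3)\Rightarrow(1)$, and I expect this to be the main obstacle. Here I start from a Hermitian $p$ with $pa=0$ and $u=a^{n}+p$ invertible, and I must recover $a\in R(a^{\ast})^{n}a\cap Ra^{n}$. From $pa=0$ and $p^{\ast}=p$ I get $a^{\ast}p=0$, hence $p(a^{\ast})^{n}=0$ as well. Multiplying the relation $u^{-1}u=1$ on suitable sides and using $pa=0$ should yield $a=u^{-1}a^{n}a/\cdots$—more precisely, from $1=u^{-1}(a^{n}+p)$ and right-multiplication by $a$ one gets $a=u^{-1}a^{n}a=u^{-1}a^{n+1}$, which already places $a\in Ra^{n}$ after absorbing one factor. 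To obtain $a\in R(a^{\ast})^{n}a$, I would exploit the Hermitian symmetry: taking adjoints of $u=a^{n}+p$ gives $u^{\ast}=(a^{\ast})^{n}+p$, and the relation $pa=0$ combined with invertibility of $u^{\ast}$ should let me write $a$ as a left multiple of $(a^{\ast})^{n}a$. The delicate point is to manage the interplay between $u^{-1}$, its adjoint, and the annihilator condition so that both membership conditions emerge simultaneously; this is where the Hermitian (rather than merely idempotent) hypothesis on $p$ does the real work.

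Finally, for the representation $a^{\co}=a^{n-1}u^{-1}$, I would verify it directly against the formula $a^{\co}=a^{n-1}s^{\ast}$ from Theorem~\ref{core-inverse 2} by identifying $u^{-1}$ with the appropriate $s^{\ast}$, or independently check that $a^{n-1}u^{-1}$ satisfies the three defining equations $ax a=a$, $xR=aR$, $Rx=Ra^{\ast}$ of a core inverse. The cleanest route is to use $u^{-1}a^{n}=1-u^{-1}p$ together with $pa=0$ to show $a(a^{n-1}u^{-1})a=a$ and the two one-sided ideal conditions, invoking uniqueness of the core inverse to conclude.
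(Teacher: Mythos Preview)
Your proposal is correct and follows essentially the same route as the paper: the projection $p=1-aa^{\co}$ for $(1)\Rightarrow(2)$, and for $(3)\Rightarrow(1)$ the identical reduction $ua=a^{n+1}$, $u^{\ast}a=(a^{\ast})^{n}a$ feeding into Theorem~\ref{core-inverse 2} to obtain $a\in Ra^{n}\cap R(a^{\ast})^{n}a$ with $s=(u^{\ast})^{-1}$, whence $a^{\co}=a^{n-1}s^{\ast}=a^{n-1}u^{-1}$. The only differences are cosmetic: the paper proves invertibility of $a^{n}+1-aa^{\co}$ by induction on $n$ via Jacobson's lemma rather than by exhibiting the explicit inverse $(a^{\co})^{n}+1-a^{\co}a$, and it handles uniqueness by showing ${}^{\circ}(a^{n})={}^{\circ}(1-p)$ and then $p=pq$, $q=qp$, $p=q$; your aside that $p(a^{\ast})^{n}=0$ does not actually follow from $a^{\ast}p=0$, but you never use it.
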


\begin{proof}
$(1)\Rightarrow (2).$ Let $p=1-aa^{\co}$,
we observe first that $p$ is a projection satisfying $pa=0$.
It is necessary for us to show that $^{\circ}\!(a^{n})=$ $^{\circ}\!(1-p)$.
If $xa^{n}=0$,
then
\begin{equation*}
\begin{split}
    0=xa^{n}=x(1-p)a^{n}=x(1-p)(a^{n}+p).
\end{split}
\end{equation*}
By $a^{n}+p\in R^{-1}$,
we have $x(1-p)=0$.
Conversely,
if $y(1-p)=0$,
then $ya^{n}=y(1-p)a^{n}=0$.
Thus $^{\circ}\!(a^{n})=$ $^{\circ}\!(1-p)$.
Assume that $p, q$ are both projection which satisfy the condition $(2)$,
then $^{\circ}\!(1-p)=$ $^{\circ}\!(a^{n})=$ $^{\circ}\!(1-q)$.
By $p\in$ $^{\circ}\!(1-p)=$ $^{\circ}\!(1-q)$,
we obtain $p=pq$.
Similarly,
we can get $q=qp$ from $q\in$ $^{\circ}\!(1-q)=$ $^{\circ}\!(1-p)$.
Thus
\begin{equation*}
\begin{split}
    p=p^{\ast}=(pq)^{\ast}=q^{\ast}p^{\ast}=qp=q.
\end{split}
\end{equation*}

Next,
we prove the invertibility of $u$ by induction on $n$.

When $n=2$,
it is easy to verify that
\begin{equation*}
\begin{split}
    (a+1-aa^{\co})(a^{\co}+1-a^{\co}a)=1=(a^{\co}+1-a^{\co}a)(a+1-aa^{\co}),
\end{split}
\end{equation*}
thus $a+1-aa^{\co}=1+aa^{\co}(a-1)$ is invertible.
Moreover,
$1+(a-1)aa^{\co}=a^{2}a^{\co}+1-aa^{\co}$ is invertible by Lemma \ref{J.L}.
Therefore,
$a^{2}+1-aa^{\co}=(a^{2}a^{\co}+1-aa^{\co})(a+1-aa^{\co})$ is invertible.

We assume that $n>2$ and the result is true for the case $n-1$.
By assumption,
$a\in R^{\co}$ implies that $a^{n-1}+1-aa^{\co}$ is invertible.
Hence,
$a^{n}+1-aa^{\co}=(a^{2}a^{\co}+1-aa^{\co})(a^{n-1}+1-aa^{\co})$ is invertible.

$(2)\Rightarrow (3)$ is trivial.

$(3)\Rightarrow (1).$ Assume that $u=a^{n}+p\in R^{-1}$,
where $p=p^{\ast}, n\geqslant 2$,
and then $u^{\ast}=(a^{\ast})^{n}+p$ is also invertible.

Since $ua=a^{n+1}$ and $u^{\ast}a=(a^{\ast})^{n}a$,
we obtain
\begin{equation*}
\begin{split}
    a=u^{-1}a^{n+1}\in Ra^{n}
\end{split}
\end{equation*}
and
\begin{equation*}
\begin{split}
    a=(u^{\ast})^{-1}(a^{\ast})^{n}a\in R(a^{\ast})^{n}a.
\end{split}
\end{equation*}
Thus $a\in R(a^{\ast})^{n}a\cap Ra^{n}$,
and then we have $a\in R^{\co}$ and $a^{\co}=a^{n-1}u^{-1}$ by Theorem~\ref{core-inverse 2}.
\end{proof}

There is a corresponding result for dual core inverses of $a\in R_{\co}$.
The following theorem shows that Theorem~\ref{n-core-inverse} is true when taking $n=1$,
but its proof is different from the proof of Theorem~\ref{n-core-inverse},
and so is the expression of the core inverse of $a$.

\begin{thm} \label{n=1-core-inverse}
Let $a\in R$.
The following conditions are equivalent: \\
$(1)$ $a\in R^{\co}$; \\
$(2)$ there exists a unique projection $p$ such that $pa=0$, $u=a+p\in R^{-1}$;\\
$(3)$ there exists a Hermitian element $p$ such that $pa=0$, $u=a+p\in R^{-1}$.\\
In this case,
\begin{equation*}
\begin{split}
    a^{\co}=u^{-1}au^{-1}=(u^{\ast}u)^{-1}a^{\ast}.
\end{split}
\end{equation*}
\end{thm}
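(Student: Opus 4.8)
The plan is to run the cycle $(1)\Rightarrow(2)\Rightarrow(3)\Rightarrow(1)$ and read the representation off the last implication. For $(1)\Rightarrow(2)$ I would imitate the base case $n=2$ of Theorem~\ref{n-core-inverse}: put $p=1-aa^{\co}$, which is a projection with $pa=a-aa^{\co}a=0$, and check by a one-line mutual-product computation that $u=a+p=a+1-aa^{\co}$ is a unit with $u^{-1}=a^{\co}+1-a^{\co}a$. For uniqueness I would first record that $pa=0$ together with invertibility of $u$ forces ${}^{\circ}a={}^{\circ}(1-p)$: if $xa=0$ then $x(1-p)u=x(1-p)a=xa=0$ gives $x(1-p)=0$, and conversely $y(1-p)=0$ gives $ya=y(1-p)a=0$. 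If $p,q$ both satisfy $(2)$, then ${}^{\circ}(1-p)={}^{\circ}a={}^{\circ}(1-q)$ yields $p=pq$ and $q=qp$, and taking adjoints gives $p=p^{\ast}=(pq)^{\ast}=qp=q$. The step $(2)\Rightarrow(3)$ is immediate since a projection is Hermitian.

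The substance is $(3)\Rightarrow(1)$, where Theorem~\ref{core-inverse 2} is unavailable because it requires $n\geqslant2$, so I must build the core inverse directly. Writing $g=u^{-1}$, from $pa=0$ and $p=p^{\ast}$ I get $a^{\ast}p=0$, hence $u^{\ast}a=a^{\ast}a=a^{\ast}u$. The identity $u^{\ast}a=a^{\ast}a$ reads $g^{\ast}a^{\ast}a=a$, so by Lemma~\ref{13-14-inverse} $g$ is a $\{1,3\}$-inverse of $a$; in particular $aga=a$ and $e:=ag$ is a projection satisfying $ea=a$ and $a^{\ast}e=a^{\ast}$. Moreover $ua=a^{2}$ gives $a=ga^{2}\in Ra^{2}$. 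By Lemma~\ref{group-inverse} the only ingredient still missing for group invertibility is the right-hand membership $a\in a^{2}R$.

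This right-hand membership is exactly the main obstacle: after setting $f:=ga$, the left-sided and $\{1,3\}$ data reduce $a\in a^{2}R$ to the single identity $ef=f$, and every naive manipulation among $e,f,g$ turns out circular until the involution is used decisively. The resolution I would use exploits that $pa=0$ gives $pe=p(ag)=0$, hence also $ep=(pe)^{\ast}=0$. Consequently $eu=a$, so $(1-e)u=u-a=p$, while $u^{\ast}(1-e)=u^{\ast}-a^{\ast}=p$ as well; thus $(1-e)u=u^{\ast}(1-e)$. Left-multiplying by $(u^{\ast})^{-1}$ gives $(u^{\ast})^{-1}(1-e)=(1-e)g$, and combining this with $(1-e)a=0$ yields $(1-e)f=(1-e)ga=(u^{\ast})^{-1}(1-e)a=0$, i.e.\ $ef=f$.

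From $ef=f$ and $af=a$ one reads off $a=a^{2}g^{2}a$, so $a\in a^{2}R$; with $a\in Ra^{2}$ this gives $a\in R^{\#}$ by Lemma~\ref{group-inverse}, with $a^{\#}=g^{2}a$ and $a^{\#}a=g^{2}a^{2}=ga=f$. Hence $a\in R^{\#}\cap R^{\{1,3\}}$, so $a\in R^{\co}$ by Lemma~\ref{dual-core-inverse}, and $a^{\co}=a^{\#}aa^{(1,3)}=fg=gag=u^{-1}au^{-1}$. Finally $a^{\ast}u=u^{\ast}a$ gives $au^{-1}=(u^{\ast})^{-1}a^{\ast}$, whence $u^{-1}au^{-1}=(u^{\ast}u)^{-1}a^{\ast}$, matching the stated representation.
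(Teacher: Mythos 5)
Your proof is correct and follows essentially the same route as the paper's: the same choice $p=1-aa^{\co}$ and uniqueness argument for $(1)\Rightarrow(2)$, the same reduction of $(3)\Rightarrow(1)$ to $a\in Ra^{2}\cap Ra^{\ast}a\cap a^{2}R$ with $u^{-1}\in a\{1,3\}$, the same final identity $a=a^{2}u^{-2}a$, and the same derivation of $a^{\co}=u^{-1}au^{-1}=(u^{\ast}u)^{-1}a^{\ast}$. Your key intermediate identity $(1-e)u^{-1}=(u^{\ast})^{-1}(1-e)$ with $e=au^{-1}$ is the paper's $pu^{-2}=(u^{\ast})^{-2}p$ in disguise, since $1-e=pu^{-1}=(u^{\ast})^{-1}p$.
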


\begin{proof}
$(1)\Rightarrow (2).$ Let $p=1-aa^{\co}$,
$p$ is a projection satisfying $pa=0$,
and the proof of the uniqueness of $p$ is similar to Theorem~\ref{n-core-inverse}.
It is easy to verify
\begin{equation*}
\begin{split}
    (a+1-aa^{\co})(a^{\co}+1-a^{\co}a)=1=(a^{\co}+1-a^{\co}a)(a+1-aa^{\co}).
\end{split}
\end{equation*}
Thus $a+1-aa^{\co}$ is invertible.

$(2)\Rightarrow (3).$ Obviously.

$(3)\Rightarrow (1).$ Assume that $u=a+p\in R^{-1}$,
where $p=p^{\ast}$,
and then $u^{\ast}=a^{\ast}+p$ is also invertible.

Since $ua=a^{2}$ and $u^{\ast}a=a^{\ast}a$,
we obtain
\begin{equation} \label{n=1-core-inverse 1}
\begin{split}
    a=u^{-1}a^{2}\in Ra^{2}
\end{split}
\end{equation}
and
\begin{equation} \label{n=1-core-inverse 2}
\begin{split}
    a=(u^{\ast})^{-1}a^{\ast}a\in Ra^{\ast}a.
\end{split}
\end{equation}
By Lemma~\ref{13-14-inverse},
it is easily seen that $a\in R^{\{1,3\}}$ with $u^{-1}\in a\{1,3\}$ from the equation (\ref{n=1-core-inverse 2}),
so we have
\begin{equation} \label{n=1-core-inverse 3}
\begin{split}
    a=aa^{(1,3)}a=au^{-1}a.
\end{split}
\end{equation}
Moreover,
$pu=p(a+p)=p^{2}$ implies $p=p^{2}u^{-1}$,
thus we have
\begin{equation} \label{n=1-core-inverse 4}
\begin{split}
    p^{2}u^{-1}=p=p^{\ast}=(u^{\ast})^{-1}p^{2},
\end{split}
\end{equation}
direct calculations with the use of (\ref{n=1-core-inverse 4}) show that
\begin{equation} \label{n=1-core-inverse 5}
\begin{split}
    pu^{-2}
    &~=((u^{\ast})^{-1}p^{2})u^{-2}=(u^{\ast})^{-1}(p^{2}u^{-1})u^{-1}\\
    &~=(u^{\ast})^{-1}((u^{\ast})^{-1}p^{2})u^{-1}=(u^{\ast})^{-2}p^{2}u^{-1}\\
    &~=(u^{\ast})^{-2}p.
\end{split}
\end{equation}
Therefore
\begin{equation} \label{n=1-core-inverse 6}
\begin{split}
    u^{-1}
    &~=uu^{-2}=(a+p)u^{-2}=au^{-2}+pu^{-2}\\
    &~\stackrel{(\ref{n=1-core-inverse 5})}{=}au^{-2}+(u^{\ast})^{-2}p.
\end{split}
\end{equation}
Thus we obtain
\begin{equation} \label{n=1-core-inverse 7}
\begin{split}
    a
    &~\stackrel{(\ref{n=1-core-inverse 3})}{=}au^{-1}a\stackrel{(\ref{n=1-core-inverse 6})}{=}a(au^{-2}+(u^{\ast})^{-2}p)a\\
    &~= a^{2}u^{-2}a\in a^{2}R.
\end{split}
\end{equation}
The equations (\ref{n=1-core-inverse 1}),
(\ref{n=1-core-inverse 2}) and (\ref{n=1-core-inverse 7}) lead to $a\in R^{\co}$ by Lemma~\ref{dual-core-inverse}.

Applying Lemma~\ref{group-inverse},
we obtain $a^{\#}=u^{-2}a$,
again by Lemma~\ref{dual-core-inverse},
we have
\begin{equation*}
\begin{split}
    a^{\co}=a^{\#}aa^{(1,3)}=(u^{-2}a)au^{-1}=u^{-1}(u^{-1}a^{2})u^{-1}\stackrel{(\ref{n=1-core-inverse 1})}{=}u^{-1}au^{-1}.
\end{split}
\end{equation*}
Further,
since $u^{-1}\in a\{1,3\}$,
thus
\begin{equation*}
\begin{split}
    a^{\co}=u^{-1}au^{-1}=u^{-1}(au^{-1})^{\ast}=u^{-1}(u^{\ast})^{-1}a^{\ast}=(u^{\ast}u)^{-1}a^{\ast}.
\end{split}
\end{equation*}
\end{proof}

The analogous result for dual core inverses of $a\in R_{\co}$ is valid.

\begin{rem} \label{3.9}
Theorem~\ref{n-core-inverse} and Theorem~\ref{n=1-core-inverse} show that $a\in R^{\co}$ if and only if
there exists $p=p^{\ast}(=p^{2})$ such that $pa=0$, $u=a^{n}+p\in R^{-1}$ for all choices $n\geqslant 1$.
Dually,
$a\in R_{\co}$ if and only if
there exists $q=q^{\ast}(=q^{2})$ such that $aq=0$, $u=a^{n}+q\in R^{-1}$ for all choices $n\geqslant 1$.
\end{rem}

Under the condition $(2)$ of Theorem~\ref{n=1-core-inverse},
since $u^{\ast}u=a^{\ast}a+p$,
the expression of the core inverse of $a$ can be showed as $a^{\co}=(a^{\ast}a+p)^{-1}a^{\ast}$.
Through this expression,
we naturally want to know whether $a$ is core invertible or not when there is a unique projection $p$ such that $pa=0, a^{\ast}a+p\in R^{-1}$.
However,
it is not true.
Here is a counterexample.

\begin{eg}\label{eg1}
Let $R$ be an infinite matrix ring over complex filed whose rows and columns are both finite,
let conjugate transpose be the involution and $a=\sum\limits_{i=1}^{\infty}e_{i+1,i}$.
Then $a^{\ast}a=1$,
$aa^{\ast}=\sum\limits_{i=2}^{\infty}e_{i,i}$.
Set $p=0$,
then it is a projection satisfying $pa=0$ and $a^{\ast}a+p=1\in R^{-1}$.
But $a$ is not group invertible,
thus $a$ is not core invertible.
\end{eg}

This counterexample shows that even if there is a unique projection $p$ such that $pa=0$ and $a^{\ast}a+p\in R^{-1}$,
$a$ is not necessary to be core invertible in general rings.
However,
it is true when we take $R$ as a Dedekind-finite ring which satisfies the property that $ab=1$ implies $ba=1$ for any $a, b\in R$,
see the following theorem.

\begin{thm} \label{D-finite ring}
Let $R$ be a Dedekind-finite ring and $a\in R$.
The following conditions are equivalent: \\
$(1)$ $a\in R^{\co}$; \\
$(2)$ there exists a unique projection $p$ such that $pa=0$, $a^{\ast}a+p\in R^{-1}$;\\
$(3)$ there exists a unique projection $p$ such that $pa=0$, $a^{\ast}a+p$ is right invertible; \\
$(4)$ there exists a unique projection $p$ such that $pa=0$, $a^{\ast}a+p$ is left invertible.\\
In this case,
\begin{equation*}
\begin{split}
    a^{\co}=(a^{\ast}a+p)^{-1}a^{\ast}.
\end{split}
\end{equation*}
\end{thm}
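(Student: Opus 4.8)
The plan is to reduce everything to the already-established Theorem~\ref{n=1-core-inverse} by exploiting the single structural fact that distinguishes a Dedekind-finite ring: a one-sided inverse is automatically a two-sided inverse. The bridge is an elementary identity. If $p$ is any projection with $pa=0$, then $a^{\ast}p=(pa)^{\ast}=0$, and for $u=a+p$ a direct expansion gives
\begin{equation*}
\begin{split}
u^{\ast}u=(a^{\ast}+p)(a+p)=a^{\ast}a+a^{\ast}p+pa+p^{2}=a^{\ast}a+p.
\end{split}
\end{equation*}
This links the hypothesis on $a^{\ast}a+p$ to the invertibility of $u$ demanded by Theorem~\ref{n=1-core-inverse}.

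For the crucial implication $(2)\Rightarrow(1)$ I would argue as follows. Assuming $a^{\ast}a+p=u^{\ast}u$ is invertible, the element $(u^{\ast}u)^{-1}u^{\ast}$ is a left inverse of $u$; since $R$ is Dedekind-finite, $ab=1$ forces $ba=1$, so $u=a+p$ is in fact a unit. Then Theorem~\ref{n=1-core-inverse} (its implication $(3)\Rightarrow(1)$, applied to the Hermitian $p$) yields $a\in R^{\co}$ together with $a^{\co}=(u^{\ast}u)^{-1}a^{\ast}=(a^{\ast}a+p)^{-1}a^{\ast}$, the claimed expression. The implications $(3)\Rightarrow(2)$ and $(4)\Rightarrow(2)$ are the same phenomenon one level down: a right (resp.\ left) inverse of $a^{\ast}a+p$ becomes a genuine inverse by Dedekind-finiteness, so each of $(3)$ and $(4)$ already forces $a^{\ast}a+p\in R^{-1}$.

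The converse $(1)\Rightarrow(2)$ and the trivial $(2)\Rightarrow(3)$, $(2)\Rightarrow(4)$ go through without the finiteness hypothesis. Given $a\in R^{\co}$, Theorem~\ref{n=1-core-inverse} provides the projection $p=1-aa^{\co}$ with $pa=0$ and $u=a+p\in R^{-1}$; the identity above makes $a^{\ast}a+p=u^{\ast}u$ a product of two units, hence invertible, which is in particular both right and left invertible. For the uniqueness clause I would show that every admissible $p$ is forced to equal $1-aa^{\co}$: by $(2)\Rightarrow(1)$ the element $u=a+p$ is invertible with $u^{-1}\in a\{1,3\}$ (since $u^{\ast}a=a^{\ast}a$ gives $a\in Ra^{\ast}a$, so $au^{-1}a=a$ by Lemma~\ref{13-14-inverse}), while $pu=p$ gives $pu^{-1}=p$; reading $uu^{-1}=1$ as $au^{-1}+pu^{-1}=1$ then yields $p=1-au^{-1}=1-aa^{\co}$, independently of the chosen $p$.

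The main obstacle is conceptual rather than computational: one must recognise that the failure in Example~\ref{eg1} is precisely a left-invertible-but-not-right-invertible phenomenon ($a^{\ast}a=1$ while $aa^{\ast}\neq 1$), and that the identity $a^{\ast}a+p=u^{\ast}u$ converts the hypothesis into the one-sided invertibility of $u$. Once this is seen, Dedekind-finiteness is exactly the property needed to close the gap and fall back on Theorem~\ref{n=1-core-inverse}; the remaining effort is only the routine bookkeeping establishing $p=1-aa^{\co}$ for the uniqueness statement.
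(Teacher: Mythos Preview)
Your argument is correct and follows the same backbone as the paper: write $u=a+p$, note $u^{\ast}u=a^{\ast}a+p$, use Dedekind-finiteness to upgrade one-sided invertibility of $u$ to two-sided, and then invoke Theorem~\ref{n=1-core-inverse}. Two small points of divergence are worth noting. First, for the equivalence $(2)\Leftrightarrow(3)\Leftrightarrow(4)$ the paper does not appeal to Dedekind-finiteness at all: since $a^{\ast}a+p$ is Hermitian, a one-sided inverse $x$ automatically yields the other side via $(a^{\ast}a+p)x^{\ast}=\bigl(x(a^{\ast}a+p)\bigr)^{\ast}=1$, so these three conditions coincide in any $\ast$-ring. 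Your use of Dedekind-finiteness here is valid but stronger than needed. Second, for uniqueness the paper simply inherits it from Theorem~\ref{n=1-core-inverse}: once $(2)\Rightarrow(1)$ is established, any admissible $p$ also makes $a+p$ a unit, and the uniqueness clause of that earlier theorem applies directly. Your explicit computation that $p=1-au^{-1}=1-aa^{\co}$ is a nice alternative and makes the independence from the choice of $p$ transparent (it rests on the standard fact that $aa^{(1,3)}$ does not depend on the particular $\{1,3\}$-inverse).
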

\begin{proof}
Since $a^{\ast}a+p$ is Hermitian,
thus $a^{\ast}a+p$ is one-sided invertible if and only if it is invertible,
hence the conditions $(2)$, $(3)$ and $(4)$ are equivalent.
Next,
we mainly show the equivalence between the conditions $(1)$ and $(2)$.

$(1)\Rightarrow (2).$ Assume that $a\in R^{\co}$ and let $p=1-aa^{\co}$,
we have $a+p\in R^{-1}$ by Theorem~\ref{n=1-core-inverse},
thus $(a+p)^{\ast}=a^{\ast}+p\in R^{-1}$.
Therefore $a^{\ast}a+p=(a^{\ast}+p)(a+p)$ is invertible.

$(2)\Rightarrow (1).$ Let $u=a+p$,
$u^{\ast}u=a^{\ast}a+p\in R^{-1}$.
As $R$ is a Dedekind-finite ring,
thus $u\in R^{-1}$,
which guarantees $a\in R^{\co}$ and $a^{\co}=(a^{\ast}a+p)^{-1}a^{\ast}$ by Theorem~\ref{n=1-core-inverse}.
\end{proof}

As mentioned before,
$a\in R^{\co}\cap R_{\co}$ if and only if $a\in R^{\dagger}\cap R^{\#}$,
and an element $a$ is called an $\mathrm{EP}$ element if $a\in R^{\dagger}\cap R^{\#}$ with $a^{\dagger}=a^{\#}$,
thus it is obvious that $a\in R^{\mathrm{EP}}$ if and only if $a\in R^{\co}\cap R_{\co}$ and $a^{\co}=a_{\co}$.
By Remark~\ref{3.9},
we obtain the following theorem.

\begin{thm} \label{EP}
Let $a\in R$,
$n\geqslant 1$.
The following conditions are equivalent: \\
$(1)$ $a\in R^{\mathrm{EP}}$; \\
$(2)$ there exists a unique projection $p$ such that $pa=ap=0$, $a^{n}+p\in R^{-1}$;\\
$(3)$ there exists a Hermitian element $p$ such that $pa=ap=0$, $a^{n}+p\in R^{-1}$.
\end{thm}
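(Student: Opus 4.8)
The plan is to exploit the characterization, recorded just before the statement, that $a\in R^{\mathrm{EP}}$ if and only if $a\in R^{\co}\cap R_{\co}$ together with $a^{\co}=a_{\co}$, and to read the two-sided annihilation $pa=ap=0$ as the assertion that a single Hermitian element $p$ simultaneously witnesses core invertibility (through its left relation $pa=0$) and dual core invertibility (through its right relation $ap=0$) in the sense of Remark~\ref{3.9}. So the guiding idea is to split $pa=ap=0$ into its two halves and feed each half separately into Remark~\ref{3.9} and its dual.

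For $(1)\Rightarrow(2)$, I would first note that an $\mathrm{EP}$ element is self-dual for these inverses: since $a^{\dagger}=a^{\#}$, the formula $a^{\co}=a^{\#}aa^{(1,3)}$ of Lemma~\ref{dual-core-inverse} (taking $a^{(1,3)}=a^{\dagger}$) collapses to $a^{\co}=a^{\#}aa^{\#}=a^{\#}$, and dually $a_{\co}=a^{\#}$. I then set $p=1-aa^{\#}$. Because $aa^{\#}=aa^{\dagger}$ is a Hermitian idempotent, $p$ is a projection; the identities $aa^{\#}a=a$ and $aa^{\#}=a^{\#}a$ give $pa=0$ and $ap=a-a^{2}a^{\#}=0$. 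Invertibility of $a^{n}+p$ is then exactly the conclusion of Remark~\ref{3.9} applied to the core inverse, since $p=1-aa^{\co}$. Uniqueness comes for free: the projection satisfying merely $pa=0$ and $a^{n}+p\in R^{-1}$ is already unique by Theorem~\ref{n-core-inverse} (and Theorem~\ref{n=1-core-inverse} for $n=1$), so there is a fortiori only one projection meeting the stronger requirement $pa=ap=0$, $a^{n}+p\in R^{-1}$. The implication $(2)\Rightarrow(3)$ is immediate, as every projection is Hermitian.

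For $(3)\Rightarrow(1)$, I would apply Remark~\ref{3.9} twice to the same Hermitian $p$: from $pa=0$ and $a^{n}+p\in R^{-1}$ we get $a\in R^{\co}$, and from $ap=0$ and $a^{n}+p\in R^{-1}$ we get $a\in R_{\co}$. It then remains to prove $a^{\co}=a_{\co}$, after which the characterization quoted above yields $a\in R^{\mathrm{EP}}$. Writing $u=a^{n}+p$, the representation of Theorem~\ref{n-core-inverse} and its dual give $a^{\co}=a^{n-1}u^{-1}$ and $a_{\co}=u^{-1}a^{n-1}$ for $n\geqslant2$, so it suffices that $a^{n-1}$ commute with $u^{-1}$. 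This is where both halves of the hypothesis are used at once: since $a^{n-1}p=a^{n-2}(ap)=0$ and $pa^{n-1}=(pa)a^{n-2}=0$, one computes $a^{n-1}u=a^{2n-1}=ua^{n-1}$, whence $a^{n-1}u^{-1}=u^{-1}a^{n-1}$ and $a^{\co}=a_{\co}$. The case $n=1$ is even easier: Theorem~\ref{n=1-core-inverse} and its dual both deliver the symmetric expression $u^{-1}au^{-1}$ for $a^{\co}$ and $a_{\co}$, so equality is automatic.

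The main obstacle is this last step, namely converting the symmetric annihilation $pa=ap=0$ into the commutation that forces $a^{\co}=a_{\co}$ (equivalently $a^{\dagger}=a^{\#}$). Everything else is a bookkeeping reduction to the one-sided statements of Remark~\ref{3.9}; the genuinely new content is that imposing the left relation $pa=0$ and the right relation $ap=0$ on the \emph{same} element $p$ synchronises the core and dual core representations through the single identity $a^{n-1}u=ua^{n-1}$.
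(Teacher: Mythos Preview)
Your proposal is correct and follows essentially the same route as the paper. The only cosmetic differences are that the paper verifies invertibility of $a^{n}+p$ in $(1)\Rightarrow(2)$ by writing down the explicit inverse $(a^{\#})^{n}+1-a^{\#}a$ rather than invoking Remark~\ref{3.9}, and in $(3)\Rightarrow(1)$ the paper observes directly that $au=ua$ (hence $a$ commutes with $u^{-1}$) instead of your equivalent computation with $a^{n-1}$; your treatment of uniqueness via Theorems~\ref{n-core-inverse} and~\ref{n=1-core-inverse} is also a welcome explicit remark that the paper leaves implicit.
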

\begin{proof}
$(1)\Rightarrow (2).$ Suppose $a\in R^{\mathrm{EP}}$,
so $a\in R^{\dagger}\cap R^{\#}$ and $a^{\dagger}=a^{\#}$.
Let $p=1-a^{\#}a=1-a^{\dagger}a$,
it is easily seen that $p$ is a projection satisfying $pa=ap=0$.
Since
\begin{equation*}
\begin{split}
    (a^{n}+1-a^{\#}a)((a^{\#})^{n}+1-a^{\#}a)=1=((a^{\#})^{n}+1-a^{\#}a)(a^{n}+1-a^{\#}a),
\end{split}
\end{equation*}
thus $a^{n}+1-a^{\#}a$ is invertible.

$(2)\Rightarrow (3).$ Obviously.

$(3)\Rightarrow (1).$ We can see that $a\in R^{\co}\cap R_{\co}$ follows from Remark~\ref{3.9},
so we only need to show that $a^{\co}=a_{\co}$.
Write $u=a^{n}+p$,
when taking $n=1$,
we have $a^{\co}=u^{-1}au^{-1}=a_{\co}$ by Theorem \ref{n=1-core-inverse}.
When $n\geqslant 2$,
according to Theorem \ref{n-core-inverse},
$a^{\co}=a^{n-1}u^{-1}, a_{\co}=u^{-1}a^{n-1}$.
Since $pa=ap=0$ implies $au=ua$,
which obtain that $a$ commutes with $u^{-1}$,
thus $a^{\co}=a^{n-1}u^{-1}=u^{-1}a^{n-1}=a_{\co}$.
Therefore,
$a\in R^{\mathrm{EP}}$.
\end{proof}

\section{Characterizing core and dual core inverse of a regular element by units in a ring}
In \cite[Theorem $5.6$]{Chen},
J.L. Chen et al. characterized core and dual core inverse of a regular element by units in a ring,
that is to say,
they proved that $a\in R^{\co}\cap R_{\co}$ if and only if $a^{2}a^{\ast}+1-aa^{-}$ is invertible.
In this section,
we will show readers that the result is valid when we change the quadratic component $a^{2}a^{\ast}$ to $a(a^{\ast})^{2}$.
Moreover,
we give the expressions of $a^{\dagger},a^{\#},a^{\co},a_{\co}$.

\begin{thm} \label{aa**aR}
Let $a\in R$,
$n\geqslant 2$.
If $a\in R$ is regular with $a^{-}\in a{\{1\}}$,
then the following conditions are equivalent:\\
$(1)$ $a\in R^{\dagger}\cap R^{\#}$;\\
$(2)$ $a\in R^{\co}\cap R_{\co}$;\\
$(3)$ $u=(a^{\ast})^{n}a+1-a^{-}a$ is invertible;\\
$(4)$ $v=a(a^{\ast})^{n}+1-aa^{-}$ is invertible;\\
$(5)$ $s=a^{-}a(a^{\ast})^{n}a+1-a^{-}a$ is invertible;\\
$(6)$ $t=a(a^{\ast})^{n}aa^{-}+1-aa^{-}$ is invertible.\\
In this case,
\begin{equation*}
\begin{split}
    a^{\co}
    &~=a^{n-1}(v^{-1}a)^{\ast},\\
    a_{\co}
    &~=(au^{-1})^{\ast}a^{n-1},\\
    a^{\dagger}
    &~=(au^{-1})^{\ast}a^{2n-1}(v^{-1}a)^{\ast},\\
    a^{\#}
    &~=(a^{n-1}(v^{-1}a)^{\ast})^{2}a.
\end{split}
\end{equation*}
\end{thm}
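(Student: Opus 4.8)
The plan is to prove $(1)\Leftrightarrow(2)$ for free (it is recorded before Theorem~\ref{cap1} that $R^{\co}\cap R_{\co}=R^{\dagger}\cap R^{\#}$), and then to organise the remaining work around two pillars: Jacobson's Lemma~\ref{J.L} to show that the four unit conditions $(3)$--$(6)$ are mutually equivalent, and Zhu--Chen--Patr\'icio's Lemma~\ref{ZHH} to tie them to the ideal memberships furnished by Theorem~\ref{cap1}. The guiding idea is that $(3)$ and $(4)$ are precisely the invertibility conditions ``$(a^{\ast})^{n}a+1-a^{-}a$'' and ``$a(a^{\ast})^{n}+1-aa^{-}$'' attached to the element $(a^{\ast})^{n}$ relative to $d=a$; so once one of them holds, all do, and they record the invertibility of $(a^{\ast})^{n}$ along $a$ in the sense of Lemma~\ref{Mary}.

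First I would dispatch $(3)\Leftrightarrow(4)\Leftrightarrow(5)\Leftrightarrow(6)$ purely formally. Writing
\begin{equation*}
u=1+\big((a^{\ast})^{n}-a^{-}\big)a,\qquad v=1+a\big((a^{\ast})^{n}-a^{-}\big)
\end{equation*}
exhibits $u$ and $v$ as $1+xy$ and $1+yx$, so Lemma~\ref{J.L} gives $(3)\Leftrightarrow(4)$; likewise $u=1+\big((a^{\ast})^{n}a-1\big)a^{-}a$ paired with $s=1+a^{-}a\big((a^{\ast})^{n}a-1\big)$ gives $(3)\Leftrightarrow(5)$, and $v=1+aa^{-}\big(a(a^{\ast})^{n}-1\big)$ paired with $t=1+\big(a(a^{\ast})^{n}-1\big)aa^{-}$ gives $(4)\Leftrightarrow(6)$. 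The reverse implication $(3)\,\&\,(4)\Rightarrow(1)$ is then immediate: since $a(1-a^{-}a)=0=(1-aa^{-})a$ one computes
\begin{equation*}
au=a(a^{\ast})^{n}a=va,
\end{equation*}
so invertibility of $u$ yields $a=a(a^{\ast})^{n}(au^{-1})\in a(a^{\ast})^{n}R$ and invertibility of $v$ yields $a=v^{-1}a(a^{\ast})^{n}a\in R(a^{\ast})^{n}a$; as $(3)\Leftrightarrow(4)$ both memberships hold at once, and condition $(3)$ of Theorem~\ref{cap1} delivers $(1)$.

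The genuine heart of the argument is $(1)\Rightarrow(3)$. From $(1)$ and Theorem~\ref{cap1} I get the \emph{plain} memberships $a\in a(a^{\ast})^{n}R$ and $a\in R(a^{\ast})^{n}a$, together with $a\in R^{\dagger}$. The difficulty is that Lemma~\ref{ZHH} needs the \emph{sandwiched} memberships $a\in a(a^{\ast})^{n}aR$ and $a\in Ra(a^{\ast})^{n}a$, and a naive projection of the witnesses into $aR$ fails (the involution genuinely couples the two sides). The bridge I would use is that $a\in R^{\dagger}$ forces $(a^{\ast})^{n}$ to be fixed on one side by the Hermitian projections $aa^{\dagger}$ and $a^{\dagger}a$: from $aa^{\dagger}a=a$ and the Hermitian identities one gets $a^{\ast}aa^{\dagger}=a^{\ast}=a^{\dagger}aa^{\ast}$, and raising this through the power yields
\begin{equation*}
(a^{\ast})^{n}=(a^{\ast})^{n}aa^{\dagger}=a^{\dagger}a(a^{\ast})^{n}.
\end{equation*}
Substituting these into the plain memberships upgrades them: $a=a(a^{\ast})^{n}t_{0}=a(a^{\ast})^{n}aa^{\dagger}t_{0}\in a(a^{\ast})^{n}aR$ and $a=s_{0}(a^{\ast})^{n}a=s_{0}a^{\dagger}a(a^{\ast})^{n}a\in Ra(a^{\ast})^{n}a$. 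With both sandwiched memberships in hand, Lemma~\ref{ZHH} makes $u$ and $v$ simultaneously left and right invertible, hence invertible, giving $(3)$ and $(4)$. I expect this upgrade step to be the main obstacle, but the support identities above reduce it to a short manipulation.

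Finally I would read off the representations. Once $u,v$ are invertible, set $s:=v^{-1}a$ and $t:=au^{-1}$; then $s(a^{\ast})^{n}a=v^{-1}(va)=a$ and $a(a^{\ast})^{n}t=(au)u^{-1}=a$, so by Theorem~\ref{core-inverse 2} one has $a^{\co}=a^{n-1}s^{\ast}=a^{n-1}(v^{-1}a)^{\ast}$ and $a_{\co}=t^{\ast}a^{n-1}=(au^{-1})^{\ast}a^{n-1}$. Since Lemma~\ref{13-14-inverse} gives $a^{n-1}s^{\ast}\in a\{1,3\}$ and $t^{\ast}a^{n-1}\in a\{1,4\}$, the factorisations of Lemma~\ref{M-P1} and Lemma~\ref{core->group} used in the proof of Theorem~\ref{cap1} yield $a^{\dagger}=a^{(1,4)}aa^{(1,3)}=(au^{-1})^{\ast}a^{2n-1}(v^{-1}a)^{\ast}$ and $a^{\#}=(a^{\co})^{2}a=\big(a^{n-1}(v^{-1}a)^{\ast}\big)^{2}a$, matching the stated formulas.
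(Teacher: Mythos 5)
Your proposal is correct, and its overall skeleton (Jacobson's Lemma for the mutual equivalence of $(3)$--$(6)$, the identity $au=a(a^{\ast})^{n}a=va$ plus Theorem~\ref{cap1} for the converse direction and the four representation formulas) coincides with the paper's. Where you genuinely diverge is in the forward direction $(1)\Rightarrow(3)$. The paper proves $(2)\Rightarrow(3)$ by exhibiting an explicit right inverse $a^{-}aa^{\co}(a_{\co}^{n})^{\ast}+1-a_{\co}a$ of $u$ and an explicit left inverse of $v$, the verification resting on the telescoping identity $a_{\co}^{n}a^{n}=a_{\co}a$; this is a direct but fairly long computation. You instead start from the plain memberships $a\in a(a^{\ast})^{n}R\cap R(a^{\ast})^{n}a$ supplied by Theorem~\ref{cap1}, upgrade them to the sandwiched memberships $a\in a(a^{\ast})^{n}aR\cap Ra(a^{\ast})^{n}a$ via the absorption identities $(a^{\ast})^{n}=(a^{\ast})^{n}aa^{\dagger}=a^{\dagger}a(a^{\ast})^{n}$ (which are correct consequences of $a\in R^{\dagger}$), and then invoke Lemma~\ref{ZHH} with $d=a$ to conclude that $u$ and $v$ are one-sided invertible on both sides, hence invertible. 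Your route trades the paper's hands-on computation for the Mary--Zhu--Chen--Patr\'icio machinery; it is arguably cleaner, it uses the regularity hypothesis exactly where Lemma~\ref{ZHH} requires it, and as a byproduct it establishes condition $(4)$ of Corollary~\ref{cap2} ($a\in a(a^{\ast})^{n}aR\cap Ra(a^{\ast})^{n}a$) along the way, whereas the paper obtains that corollary only afterwards by comparing the theorem with Lemma~\ref{Mary}. The paper's computation, on the other hand, is self-contained at that point and displays concrete one-sided inverses of $u$ and $v$ in terms of $a^{\co}$ and $a_{\co}$, which your argument does not produce.
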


\begin{proof}
It is obvious that the conditions $(1)$ and $(2)$ are equivalent,
and equivalences of the conditions $(3), (4), (5)$ and $(6)$ can be deduced from Lemma~\ref{J.L}.

$(2)\Rightarrow (3).$ Since
\begin{equation*}
\begin{split}
    &~ ~~~u[a^{-}aa^{\co}(a_{\co}^{n})^{\ast}+1-a_{\co}a]\\
    &~=[(a^{\ast})^{n}a+1-a^{-}a][a^{-}aa^{\co}(a_{\co}^{n})^{\ast}+1-a_{\co}a]\\
    &~=(a^{\ast})^{n}aa^{-}aa^{\co}(a_{\co}^{n})^{\ast}+(1-a^{-}a)(1-a_{\co}a)\\
    &~=(a^{\ast})^{n}aa^{\co}(a_{\co}^{n})^{\ast}+1-a_{\co}a\\
    &~=(a^{\ast})^{n}(aa^{\co})^{\ast}(a_{\co}^{n})^{\ast}+1-a_{\co}a\\
    &~=(aa^{\co}a^{n})^{\ast}(a_{\co}^{n})^{\ast}+1-a_{\co}a\\
    &~=(a^{n})^{\ast}(a_{\co}^{n})^{\ast}+1-a_{\co}a\\
    &~=(a_{\co}^{n}a^{n})^{\ast}+1-a_{\co}a\\
    &~=(a_{\co}a)^{\ast}+1-a_{\co}a\\
    &~=a_{\co}a+1-a_{\co}a\\
    &~=1,
\end{split}
\end{equation*}
where the eighth equation follows from
\begin{equation*}
\begin{split}
    a_{\co}^{n}a^{n}
    &~=a_{\co}^{n-2}(a_{\co}^{2}a)a^{n-1}=a_{\co}^{n-2}a_{\co}a^{n-1}=a_{\co}^{n-1}a^{n-1}\\
    &~=a_{\co}^{n-3}(a_{\co}^{2}a)a^{n-2}=a_{\co}^{n-3}a_{\co}a^{n-2}=a_{\co}^{n-2}a^{n-2}\\
    &~=\cdots =a_{\co}^{2}a^{2}=(a_{\co}^{2}a)a=a_{\co}a,
\end{split}
\end{equation*}
thus $u$ is right invertible with right inverse $a^{-}aa^{\co}(a_{\co}^{n})^{\ast}+1-a_{\co}a$.
Similarly,
$[((a^{\co})^{n})^{\ast}a_{\co}aa^{-}+1-aa^{\co}]v=1$ implies that $v$ is left invertible,
thus $u$ is left invertible by Lemma~\ref{J.L}.
Further,
$u=(a^{\ast})^{n}a+1-a^{-}a$ is invertible.

$(3)\Rightarrow (2).$ Since $u$ is invertible if and only if $v$ is invertible follows from Lemma~\ref{J.L},
$au=a(a^{\ast})^{n}a=va$,
so we have
\begin{equation*}
\begin{split}
    a=a(a^{\ast})^{n}au^{-1}\in a(a^{\ast})^{n}R,\\
    a=v^{-1}a(a^{\ast})^{n}a\in R(a^{\ast})^{n}a.
\end{split}
\end{equation*}
By Theorem~\ref{cap1},
we obtain $a\in R^{\co}\cap R_{\co}$ and the following representations,
\begin{equation*}
\begin{split}
    a^{\co}
    &~=a^{n-1}(v^{-1}a)^{\ast},\\
    a_{\co}
    &~=(au^{-1})^{\ast}a^{n-1},\\
    a^{\dagger}
    &~=(au^{-1})^{\ast}a^{2n-1}(v^{-1}a)^{\ast},\\
    a^{\#}
    &~=[a^{n-1}(v^{-1}a)^{\ast}]^{2}a.
\end{split}
\end{equation*}
\end{proof}

In \cite[Theorem $3.2$]{X1},
X. Mary et al. proved that $m$ is invertible along $d$ if and only if $d\in dmdR\cap Rdmd$.
Comparing Theorem~\ref{aa**aR} and Lemma~\ref{Mary},
we have the following corollary.

\begin{cor} \label{cap2}
Let $a\in R$,
$n\geqslant 2$.
The following conditions are equivalent:\\
$(1)$ $a\in R^{\dagger}\cap R^{\#}$;\\
$(2)$ $a\in R^{\co}\cap R_{\co}$;\\
$(3)$ $(a^{\ast})^{n}$ is invertible along $a$;\\
$(4)$ $a\in a(a^{\ast})^{n}aR\cap Ra(a^{\ast})^{n}a$.\\
In this case,
\begin{equation*}
\begin{split}
    a^{\co}
    &~= a^{n-1}a^{\ast}y^{\ast},\\
    a_{\co}
    &~= x^{\ast}a^{\ast}a^{n-1},\\
    a^{\dagger}
    &~=x^{\ast}a^{\ast}a^{2n-1}a^{\ast}y^{\ast},\\
    a^{\#}
    &~=[a^{n-1}a^{\ast}y^{\ast}]^{2}a=a[x^{\ast}a^{\ast}a^{n-1}]^{2},
\end{split}
\end{equation*}
where $a= a(a^{\ast})^{n}ax= ya(a^{\ast})^{n}a$ for some $x,y\in R$.
\end{cor}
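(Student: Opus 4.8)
The plan is to recognize that this corollary is essentially a dictionary translation: the conditions $(3)$ and $(4)$ are standard characterizations of invertibility along $a$, and the engine that connects everything to the core/dual-core inverses is already supplied by Theorem~\ref{aa**aR} and Lemma~\ref{Mary}. First I would establish the equivalence of $(1)$ and $(2)$, which is immediate from the remark preceding Theorem~\ref{cap1} that $a\in R^{\co}\cap R_{\co}$ if and only if $a\in R^{\dagger}\cap R^{\#}$. For the equivalence of $(3)$ and $(4)$, I would invoke the result of X. Mary cited just above the corollary: $m$ is invertible along $d$ if and only if $d\in dmdR\cap Rdmd$. Applying this with $d=a$ and $m=(a^{\ast})^{n}$ gives precisely that $(a^{\ast})^{n}$ is invertible along $a$ if and only if $a\in a(a^{\ast})^{n}aR\cap Ra(a^{\ast})^{n}a$, which is condition $(4)$.

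The heart of the argument is tying these to conditions $(1)$--$(2)$, and here I would route through Lemma~\ref{Mary}. Since $a$ is regular with inner inverse $a^{-}$, Lemma~\ref{Mary} says that $(a^{\ast})^{n}$ is invertible along $a$ is equivalent to the invertibility of $u=a(a^{\ast})^{n}+1-aa^{-}$ (taking the roles of $d=a$, the element $(a^{\ast})^{n}$, and the two units $da+1-dd^{-}$ and $ad+1-d^{-}d$). But condition $(4)$ of Theorem~\ref{aa**aR} is exactly $v=a(a^{\ast})^{n}+1-aa^{-}$ invertible, which Theorem~\ref{aa**aR} already shows is equivalent to $a\in R^{\dagger}\cap R^{\#}$. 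Thus the chain $(3)\Leftrightarrow(4)$ closes up with $(1)\Leftrightarrow(2)$ through the shared unit appearing in both Lemma~\ref{Mary} and Theorem~\ref{aa**aR}. I would write this as the single observation that the unit $v=a(a^{\ast})^{n}+1-aa^{-}$ simultaneously witnesses invertibility along $a$ (by Lemma~\ref{Mary}) and membership in $R^{\dagger}\cap R^{\#}$ (by Theorem~\ref{aa**aR}), so all four conditions stand or fall together.

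For the representations, I would extract the witnessing equations from condition $(4)$: suppose $a=a(a^{\ast})^{n}ax=ya(a^{\ast})^{n}a$ for some $x,y\in R$. Reading $a=a(a^{\ast})^{n}(ax)=a(a^{\ast})^{n}t$ with $t=ax$ and $a=(ya)(a^{\ast})^{n}a=s(a^{\ast})^{n}a$ with $s=ya$ puts us into the hypotheses of Theorem~\ref{cap1}, whose formulas read $a^{\co}=a^{n-1}s^{\ast}=a^{n-1}(ya)^{\ast}=a^{n-1}a^{\ast}y^{\ast}$ and $a_{\co}=t^{\ast}a^{n-1}=(ax)^{\ast}a^{n-1}=x^{\ast}a^{\ast}a^{n-1}$, and similarly $a^{\dagger}=t^{\ast}a^{2n-1}s^{\ast}=x^{\ast}a^{\ast}a^{2n-1}a^{\ast}y^{\ast}$ and $a^{\#}=(a^{\co})^{2}a=[a^{n-1}a^{\ast}y^{\ast}]^{2}a$, matching the claimed expressions after the substitutions $s^{\ast}=a^{\ast}y^{\ast}$ and $t^{\ast}=x^{\ast}a^{\ast}$.

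I do not expect a genuine obstacle here, since every ingredient is a previously proved result; the only point requiring care is bookkeeping the identifications $s=ya$, $t=ax$ so that the factors $a^{\ast}$ appear in the right places, and confirming that the unit in Lemma~\ref{Mary} is literally the same element $v$ appearing in Theorem~\ref{aa**aR}, so that no separate invertibility computation is needed.
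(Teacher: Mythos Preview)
Your proposal is correct and follows exactly the approach the paper indicates: the paper simply says the corollary follows by ``comparing Theorem~\ref{aa**aR} and Lemma~\ref{Mary}'', and your plan fleshes out precisely that comparison, together with the substitution $s=ya$, $t=ax$ into the formulas of Theorem~\ref{cap1}. One small point worth making explicit: Corollary~\ref{cap2} carries no standing regularity hypothesis on $a$, whereas both Theorem~\ref{aa**aR} and Lemma~\ref{Mary} do, so in the direction $(4)\Rightarrow(1)$ you should first observe that $a\in a(a^{\ast})^{n}aR\subseteq aa^{\ast}R$ forces $a\in R^{\{1,4\}}$ (Lemma~\ref{13-14-inverse}), hence $a$ is regular, before invoking those results.
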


Theorem~\ref{cap1} and Corollary~\ref{cap2} show that $a\in R^{\co}\cap R_{\co}$ if and only if $a\in a(a^{\ast})^{n}R\cap R(a^{\ast})^{n}a$ if and only if $a\in a(a^{\ast})^{n}aR\cap Ra(a^{\ast})^{n}a$.
And it is easily seen that $a\in Ra(a^{\ast})^{n}a\subseteq R(a^{\ast})^{n}a$,
but $a\in R(a^{\ast})^{n}a$ does not lead to $a\in Ra(a^{\ast})^{n}a$.
There is a counterexample in the following when taking $n=2$.

\begin{eg}
Let $R=M_{2}(\mathbb{C})$ be the ring of all $2 \times 2$ matrices over the complex field $\mathbb{C}$.
Taking transposition as involution,
considering the matrix $a=\left(
                            \begin{array}{cc}
                              1 & i \\
                              0 & 0  \\
                            \end{array}
                          \right)$,
we have $a^{2}=a, aa^{\ast}=0, a^{\ast}a=\left(
               \begin{array}{cc}
                1 & i \\
                i & -1  \\
                \end{array}
                \right)
\neq 0$.
Thus $a=\left(
               \begin{array}{cc}
                1 & 0 \\
                0 & 0  \\
                \end{array}
                \right)
(a^{\ast})^{2}a\in R(a^{\ast})^{2}a$,
but $a\notin Ra(a^{\ast})^{2}a$.
\end{eg}

In \cite{CZ},
H.H. Zhu et al. proved $a\in Raa^{\ast}a$ if and only if $a\in aa^{\ast}aR$.
Reproducing his work,
we get the following result.

\begin{prop} \label{aa*na}
Let $a\in R$,
$n\geqslant 1$,
we have the following results:\\
$(1)$ if $a\in Ra(a^{\ast})^{n}a$, then $a\in a^{n}a^{\ast}a^{n}R$;\\
$(2)$ if $a\in a(a^{\ast})^{n}aR$, then $a\in Ra^{n}a^{\ast}a^{n}$.
\end{prop}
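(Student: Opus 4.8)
The plan is to prove (1) directly and then obtain (2) by applying (1) to $a^{\ast}$ together with the involution. For (1) the only structural input I expect to need is Lemma~\ref{13-14-inverse}(1); in fact Proposition~\ref{a**a} need not be invoked at all, even though the surrounding discussion produces $a\in a^{n}R$ as a by-product.

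First I would extract a $\{1,3\}$-inverse from the hypothesis. Rewriting the assumption $a=ra(a^{\ast})^{n}a$ as $a=sa^{\ast}a$ with $s=ra(a^{\ast})^{n-1}$, Lemma~\ref{13-14-inverse}(1) tells me that $g:=s^{\ast}=a^{n-1}a^{\ast}r^{\ast}$ is a $\{1,3\}$-inverse of $a$. From this single fact I would read off two relations. The identity $aga=a$ gives $a=a^{n}a^{\ast}r^{\ast}a$, which already exhibits the block $a^{n}a^{\ast}$ on the left but has a trailing $r^{\ast}a$ that is not yet of the required shape. Applying the involution to $a=sa^{\ast}a$ gives $a^{\ast}=a^{\ast}as^{\ast}=a^{\ast}a^{n}a^{\ast}r^{\ast}$, an expression for $a^{\ast}$ whose leading block is exactly $a^{\ast}a^{n}$.

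The decisive step is then a single substitution: in $a=a^{n}a^{\ast}r^{\ast}a$, replace the lone factor $a^{\ast}$ sitting between $a^{n}$ and $r^{\ast}$ by the right-hand side $a^{\ast}a^{n}a^{\ast}r^{\ast}$ of the second relation. This manufactures the block $a^{n}a^{\ast}a^{n}$ at the front and yields $a=a^{n}a^{\ast}a^{n}\,[\,a^{\ast}(r^{\ast})^{2}a\,]$, whence $a\in a^{n}a^{\ast}a^{n}R$ with explicit witness $w=a^{\ast}(r^{\ast})^{2}a$. I expect this to be the crux: the computations before it are routine bookkeeping, and the whole argument hinges on recognizing that the involution of $a=sa^{\ast}a$ produces precisely the prefix $a^{\ast}a^{n}$ needed to convert the single $a^{\ast}$ of $a=a^{n}a^{\ast}r^{\ast}a$ into the desired $a^{n}a^{\ast}a^{n}$. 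The argument is uniform in $n\geqslant 1$.

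Finally, for (2) I would apply (1) to the element $a^{\ast}$. If $a\in a(a^{\ast})^{n}aR$, taking the involution turns this into $a^{\ast}\in Ra^{\ast}a^{n}a^{\ast}=Ra^{\ast}(a^{\ast\ast})^{n}a^{\ast}$, which is exactly the hypothesis of (1) for the element $a^{\ast}$. Part (1) then delivers $a^{\ast}\in (a^{\ast})^{n}a(a^{\ast})^{n}R$, and one further application of the involution gives $a\in Ra^{n}a^{\ast}a^{n}$, as required.
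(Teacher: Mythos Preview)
Your proposal is correct and follows essentially the same route as the paper. The paper writes $a=xa(a^{\ast})^{n}a$, verifies directly that $xa(a^{\ast})^{n}$ is symmetric (which is exactly your statement that $g=s^{\ast}$ is a $\{1,3\}$-inverse, i.e.\ $(ag)^{\ast}=ag$), deduces $a=a^{n}a^{\ast}x^{\ast}a$, takes the involution to get $a^{\ast}=a^{\ast}a^{n}a^{\ast}x^{\ast}$, and performs the same substitution to obtain the same witness $a^{\ast}(x^{\ast})^{2}a$; part (2) is handled by the same duality via involution that you use.
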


\begin{proof}
$(1)$ Suppose $a\in Ra(a^{\ast})^{n}a$,
there exists $x\in R$ such that
\begin{equation} \label{aa*na1}
\begin{split}
    a=xa(a^{\ast})^{n}a.
\end{split}
\end{equation}
Taking involution on (\ref{aa*na1}),
we get
\begin{equation} \label{aa*na2}
\begin{split}
    a^{\ast}=a^{\ast}a^{n}a^{\ast}x^{\ast}.
\end{split}
\end{equation}
Again by (\ref{aa*na1}),
we obtain
\begin{equation*}
\begin{split}
    [xa(a^{\ast})^{n}]^{\ast}
    &~=a^{n}(xa)^{\ast}=aa^{n-1}(xa)^{\ast}\\
    &~\stackrel{(\ref{aa*na1})}{=}[xa(a^{\ast})^{n}a]a^{n-1}(xa)^{\ast}\\
    &~=xa(a^{\ast})^{n}a^{n}(xa)^{\ast}.
\end{split}
\end{equation*}
Hence $[xa(a^{\ast})^{n}]^{\ast}$ is symmetric,
that is to say,
\begin{equation} \label{aa*na3}
\begin{split}
    xa(a^{\ast})^{n}=[xa(a^{\ast})^{n}]^{\ast}.
\end{split}
\end{equation}
The equalities (\ref{aa*na1}),
(\ref{aa*na2}) and (\ref{aa*na3}) give
\begin{equation*}
\begin{split}
    a
    &~\stackrel{(\ref{aa*na1})}{=}xa(a^{\ast})^{n}a \stackrel{(\ref{aa*na3})}{=}[xa(a^{\ast})^{n}]^{\ast}a\\
    &~=a^{n}a^{\ast}x^{\ast}a \stackrel{(\ref{aa*na2})}{=}a^{n}(a^{\ast}a^{n}a^{\ast}x^{\ast})x^{\ast}a\\
    &~=a^{n}a^{\ast}a^{n}(a^{\ast}x^{\ast}x^{\ast}a)\in a^{n}a^{\ast}a^{n}R.
\end{split}
\end{equation*}

$(2)$ Dually as $(1)$.
\end{proof}

As we all know,
$a\in Raa^{\ast}a$ if and only if $a\in aa^{\ast}aR$,
but $a\in Ra(a^{\ast})^{n}a$ is not equivalent to $a\in a(a^{\ast})^{n}aR$ when $n\geqslant 2$.
Taking the condition $n=2$ for example.

\begin{eg}
Let $R$ be as Example~\ref{eg1},
and let $a=\sum\limits_{i=1}^{\infty}e_{i,i+1}$.
Then $aa^{\ast}=1$,
$a^{\ast}a=\sum\limits_{i=2}^{\infty}e_{i,i}$.
Moreover,
$a(a^{\ast})^{2}a=a^{\ast}a$,
$a=aa^{\ast}a\in Ra(a^{\ast})^{2}a$,
but $a\notin a(a^{\ast})^{2}aR$.
However,
when taking $R$ as a Dedekind-finite ring,
we will get some unexpected results.
\end{eg}

\begin{thm}\label{D-f3}
Let $a\in R$,
$n\geqslant 2$,
consider the following conditions:\\
$(1)$ $R$ is a Dedekind-finite ring;\\
$(2)$ $a\in a(a^{\ast})^{n}aR$ if and only if $a\in Ra(a^{\ast})^{n}a$;\\
$(3)$ $aa^{\ast}=1$ implies $a^{\ast}a=1$ for any $a\in R$.\\
Then we have $(1)\Rightarrow (2)\Rightarrow (3)$.
\end{thm}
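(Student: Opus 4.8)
The plan is to read conditions $(2)$ and $(3)$ as statements quantified over all elements of $R$ (so that $(1)\Rightarrow(2)\Rightarrow(3)$ become implications between properties of the ring). For $(1)\Rightarrow(2)$ the crucial point is that both memberships $a\in Ra(a^{\ast})^{n}a$ and $a\in a(a^{\ast})^{n}aR$ are governed, through Lemma~\ref{ZHH}, by one and the same element $w=a(a^{\ast})^{n}+1-aa^{-}$. Indeed, taking $d=a$ and applying Lemma~\ref{ZHH} to the element $(a^{\ast})^{n}$ along $d$, its left version identifies $a\in Ra(a^{\ast})^{n}a$ with left invertibility of $w$, while its right version identifies $a\in a(a^{\ast})^{n}aR$ with right invertibility of $w$. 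In a Dedekind-finite ring $sw=1$ forces $ws=1$ and dually, so $w$ is left invertible if and only if it is right invertible; hence the two memberships coincide.

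To invoke Lemma~\ref{ZHH} I first need $a$ to be regular. I would recover an inner inverse exactly by the Hermitian-symmetry computation in the proof of Proposition~\ref{aa*na}: from $a=xa(a^{\ast})^{n}a$ one gets that $h=xa(a^{\ast})^{n}=a^{n}a^{\ast}x^{\ast}$ is Hermitian with $a=ha$, so that $a=a^{n}a^{\ast}x^{\ast}a=a(a^{n-1}a^{\ast}x^{\ast})a$ and $a^{n-1}a^{\ast}x^{\ast}\in a\{1\}$; the membership $a\in a(a^{\ast})^{n}aR$ gives regularity dually. Thus whenever either membership holds, $a$ is regular and the chain
\begin{equation*}
a\in Ra(a^{\ast})^{n}a \iff w \text{ is left invertible} \iff w \text{ is right invertible} \iff a\in a(a^{\ast})^{n}aR
\end{equation*}
yields the equivalence; if neither membership holds the biconditional in $(2)$ is vacuously true. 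This completes $(1)\Rightarrow(2)$.

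For $(2)\Rightarrow(3)$ I argue contrapositively: given $a$ with $aa^{\ast}=1$ but $a^{\ast}a\neq1$, I take $a$ itself as the element witnessing the failure of $(2)$. From $aa^{\ast}=1$ one computes $a(a^{\ast})^{n}a=(a^{\ast})^{n-1}a$ and $a^{n-1}(a^{\ast})^{n-1}=1$, so $a=a^{n-1}(a^{\ast})^{n-1}a\in Ra(a^{\ast})^{n}a$ holds automatically. It remains to rule out $a\in a(a^{\ast})^{n}aR$. If $a=(a^{\ast})^{n-1}ar$, then left multiplication by $a^{n-1}$ gives $ar=a^{n}$, whence $a=(a^{\ast})^{n-1}a^{n}=Pa$ with $P=(a^{\ast})^{n-1}a^{n-1}$ a projection; multiplying $a=Pa$ on the right by $a^{\ast}$ collapses it to $1=P$, that is $(a^{\ast})^{n-1}a^{n-1}=1$. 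Combined with $a^{n-1}(a^{\ast})^{n-1}=1$ this makes $a^{n-1}$ two-sided invertible, so $a$ acquires the left inverse $(a^{\ast})^{n-1}a^{n-2}$ alongside the right inverse $a^{\ast}$; these coincide, forcing $a^{\ast}a=1$ — a contradiction.

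The step I expect to be most delicate is the bookkeeping in $(1)\Rightarrow(2)$: one must pair the left/right halves of Lemma~\ref{ZHH} with the two ideal memberships correctly — they attach to the single element $w$ rather than to two unrelated ones — and must not overlook that regularity of $a$ is a standing hypothesis of that lemma, which is precisely what the symmetry computation supplies. In $(2)\Rightarrow(3)$ the only genuine point is the passage from $(a^{\ast})^{n-1}a^{n-1}=1$ to $a^{\ast}a=1$ when $n\geqslant3$, which the two-sided invertibility of $a^{n-1}$ together with uniqueness of inverses resolves.
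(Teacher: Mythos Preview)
Your argument is correct and follows essentially the same route as the paper: for $(1)\Rightarrow(2)$ both you and the paper reduce the two ideal memberships via Lemma~\ref{ZHH} to one-sided invertibility of a single element (you use $a(a^{\ast})^{n}+1-aa^{-}$, the paper the equivalent $(a^{\ast})^{n}a+1-a^{-}a$) and then invoke Dedekind-finiteness, while for $(2)\Rightarrow(3)$ both derive $a=(a^{\ast})^{n-1}a^{n}$ from $aa^{\ast}=1$ together with $a\in a(a^{\ast})^{n}aR$ and conclude $a^{\ast}a=1$. Your treatment is in fact slightly more careful than the paper's, since you explicitly supply the regularity of $a$ required by Lemma~\ref{ZHH} (via the Hermitian-symmetry trick of Proposition~\ref{aa*na}), a hypothesis the paper's proof uses without justification.
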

\begin{proof}
$(1)\Rightarrow (2).$ Since $R$ is a Dedekind-finite ring,
which guarantees that $(a^{\ast})^{n}a+1-a^{-}a$ is right invertible if and only if $(a^{\ast})^{n}a+1-a^{-}a$ is left invertible.
Hence,
$a\in a(a^{\ast})^{n}aR$ if and only if $a\in Ra(a^{\ast})^{n}a$ by Lemma~\ref{ZHH}.

$(2)\Rightarrow (3).$ Suppose that $aa^{\ast}=1$,
and then $a^{k}(a^{\ast})^{k}=1$ for any $k\geqslant 1$.
Moreover,
$a=a^{n-1}(a^{\ast})^{n-1}a\in R(a^{\ast})^{n-1}a=Ra(a^{\ast})^{n}a$,
which implies $a\in a(a^{\ast})^{n}aR=(a^{\ast})^{n-1}aR$ by the given condition $(2)$.
Thus there exists $t\in R$ such that $a=(a^{\ast})^{n-1}at$.
Furthermore,
\begin{equation*}
\begin{split}
    (a^{\ast})^{n-1}a^{n}
    &~=(a^{\ast})^{n-1}a^{n-1}((a^{\ast})^{n-1}at)\\
    &~=(a^{\ast})^{n-1}(a^{n-1}(a^{\ast})^{n-1})at\\
    &~=(a^{\ast})^{n-1}at=a,
\end{split}
\end{equation*}
hence we obtain
\begin{equation*}
\begin{split}
    a^{\ast}a
    &~=(a^{n-2}(a^{\ast})^{n-2})a^{\ast}a(a^{n-1}(a^{\ast})^{n-1})\\
    &~=a^{n-2}((a^{\ast})^{n-1}a^{n})(a^{\ast})^{n-1}=a^{n-2}a(a^{\ast})^{n-1}\\
    &~=a^{n-1}(a^{\ast})^{n-1}=1.
\end{split}
\end{equation*}
\end{proof}

\vspace{0.2cm} \noindent {\large\bf Acknowledgements}

This research is supported by the National Natural Science Foundation of China (No.11201063 and No.11371089); the Natural Science Foundation of Jiangsu Province (No.BK20141327).

\end{document}